\documentclass[11pt]{article}
\pdfminorversion=7
\usepackage{graphicx, color, xcolor}
\usepackage{hyperref}

\usepackage{amsmath,amsfonts,amssymb}
\usepackage{amsthm}

\parindent 0pt
\parskip 4pt
\addtolength{\textwidth}{2.5cm}
\addtolength{\oddsidemargin}{-1.25cm}
\addtolength{\textheight}{4cm}
\addtolength{\topmargin}{-2cm}

\hfuzz=10 pt

\def\forall{\hbox{for all}~}
\def\L{\mathbf{L}}

\def\ve{\varepsilon}


\def\R{{\mathbb R}}

\def\implies{\Longrightarrow}
\def\vp{\varphi}

\def\P{{\cal P}}

\def\vs{\vskip 2em}

\def\v{\vskip 1em}
\def\O{{\cal O}}

\def\C{{\cal C}}
\def\F{{\cal F}}

\def\Hat{\widehat}
\def\meas{\hbox{meas}}

\def\bega{\begin{array}}
\def\enda{\end{array}}
\def\begi{\begin{itemize}}
\def\endi{\end{itemize}}

\def\ds{\displaystyle}
\def\bel{\begin{equation}\label}
\def\eeq{\end{equation}}
\def\sqr#1#2{\vbox{\hrule height .#2pt
\hbox{\vrule width .#2pt height #1pt \kern #1pt
\vrule width .#2pt}\hrule height .#2pt }}
\def\square{\sqr74}
\def\endproof{\hphantom{MM}\hfill\llap{$\square$}\goodbreak}

\newtheorem{theorem}{Theorem}[section]

\newtheorem{lemma}{Lemma}[section]
\newtheorem{example}{Example}[section]

\newtheorem{remark}{Remark}[section]
\newtheorem{definition}{Definition}[section]

\definecolor{bluegreen}{rgb}{0.0, 0.87, 0.87}
\definecolor{blush}{rgb}{0.87, 0.36, 0.51}
\definecolor{bittersweet}{rgb}{1.0, 0.44, 0.37}
\definecolor{burgundy}{rgb}{0.5, 0.0, 0.13}
\definecolor{cardinal}{rgb}{0.77, 0.12, 0.23}

\def\wen{}


\begin{document}

\title{\bf Conservation Laws with Discontinuous Gradient-Dependent Flux: the Unstable Case}
\vs

\author{Debora Amadori$^*$,  Alberto Bressan$^{**}$, and Wen Shen$^{**}$\\
\, \\
{\small  *  Dipartimento di Matematica Pura e Applicata, Universit\`a degli Studi dell'Aquila, }
\\  {\small  Via Vetoio, 67010 Coppito, Italy.}\,\\ 
{\small  ** Mathematics Department, Penn State University,
University Park, PA 16802, U.S.A. } \\ 
\,\\
{\small E-mails:  debora.amadori@univaq.it, ~axb62@psu.edu,~ wxs27@psu.edu}
\,\\   }
\maketitle  

\begin{abstract}
The paper is concerned with a scalar conservation law with discontinuous gradient-dependent flux.
Namely, the flux is described by two different functions $f(u)$ or $g(u)$, 
when the gradient $u_x$ of the solution is positive or negative, respectively.
We study here the unstable case where $f(u)>g(u)$ for all $u\in\R$.
Assuming that both $f$ and $g$ are strictly convex, solutions to the Riemann problem
are constructed. Even for a smooth initial data, examples show that the Cauchy problem can have infinitely many solutions.

For an initial data which is piecewise monotone, i.e., increasing or decreasing on a finite number of intervals, 
a solution can be constructed globally in time. It is proved that such solution is unique under the additional 
requirement that the number of interfaces, where the flux switches between $f$ and $g$, remains as small as possible.
\end{abstract}

\maketitle

\section{Introduction}
\label{sec:1}
\setcounter{equation}{0}

In this paper we consider a conservation law  with discontinuous flux, 
where the discontinuity is related to the sign of the gradient $u_x$, namely
\bel{1}
u_t + \Big[\theta(u_x) f(u) + \bigl(1-\theta(u_x) \bigr)g(u)\Big]_x~=~0.\eeq
Here $f,g$ are smooth flux functions, and $\theta$ is the step function
\bel{2}\theta(s)~=~\left\{\bega{rl} 1\quad &\hbox{if}\quad s>0,\cr
0\quad &\hbox{if}\quad s<0.\enda\right.\eeq

{\wen
Note that the function $\theta(u_x)$ is undetermined at points where
$u_x=0$. In particular, it can be arbitrary on intervals where the solution $u(t,x)$ is constant in $x$.  
We emphasize that the model \eqref{1}-\eqref{2} is entirely different from 
those found in earlier literature.  Conservation laws with discontinuous flux
have been extensively studied in the cases where the
flux function depends on time, space or the conserved quantity, i.e.,
$F=F(t,x,u)$.  Here, on the other hand, we consider a discontinuous flux $F=F(u,u_x)$ which switches
according to the sign of $u_x$.}

The equations \eqref{1}-\eqref{2}) can be used as a model for traffic flow, where $u$ is the density of cars. 
The main feature of the model lies in the key assumption that the drivers behave differently when they 
are in accelerating or decelerating mode (with their foot on the gas or on the brake pedal).
Such assumption is reasonable by personal experience as well as witnessed by traffic data \cite{TM_data}. 
One expects that, in the region where the  density decreases, i.e.~$u_x(t,x) <0$,  the cars accelerate; 
and if the density increases, the cars  decelerate.   
This leads to two different flux functions: $f$ and $g$, in the decelerating and accelerating mode, respectively.
The above equations capture this feature of  traffic flow, assuming that the switching between the acceleration 
and deceleration mode happens instantly. 
If  $\theta=\theta(u_x)$
is replaced by a smooth function, such transition would be gradual. 

In principle, there can be other models where the flux has discontinuous dependence on the gradient of the conserved quantity. 
In this paper we thus consider \eqref{1} in a general setting, not necessarily restricted to traffic flow. 

In the companion paper \cite{ABS}, under the assumption $f< g$, solutions to (\ref{1}) 
are constructed as the unique limits of parabolic approximations
\bel{3} u_t +  \Big[\eta_\ve(u_x) f'(u) + \bigl(1-\eta_\ve(u_x) \bigr)g'(u)\Big] u_x
~=~\eta'_\ve(u_x) \bigl[ g(u) - f(u)\bigr] u_{xx}\,.
\eeq
Here $\eta_\ve(s)~\doteq~\eta(s/\ve)$, while $\eta:\R\mapsto [0,1]$ is a  smooth, nondecreasing function satisfying
\bel{tprop1} \eta(s)~=~\left\{\bega{rl} 1\quad &\hbox{if}\quad s\geq 1,\cr
0\quad &\hbox{if}\quad s\leq -1.\enda\right.
\eeq
{\wen
To build some intuition, consider Fig.~\ref{f:df4} where the initial data $\bar u$ is plotted as a thick curve, 
with a local maximum at $x_1$ and a local minimum at $x_2$. 
Consider a small interval around $x_1$. On the left of $x_1$ one has $u_x >0$, hence the incoming flux is $f(u)$. 
Similarly, on the right of $x_2$ one has $u_x<0$ and  the ougoing flux is $g(u)$.  In the case where $f<g$, the total mass on this small interval decreases, and the point $x_1$ acts as a sink. A symmetric argument applies to the point $x_2$ of local minimum, which acts as a source. As a consequence, 
all local maxima decrease in time, while local minima increase.
}
When $f<g$, the discontinuity of the flux has a strong stabilizing effect. Indeed, as proved in \cite{ABS}, 
the unique limits of the viscous approximations (\ref{3}) yield a contractive semigroup w.r.t.~the $\L^1$ distance.

\begin{figure}[htbp]
\begin{center}
\resizebox{.65\textwidth}{!}{
\begin{picture}(0,0)%
\includegraphics{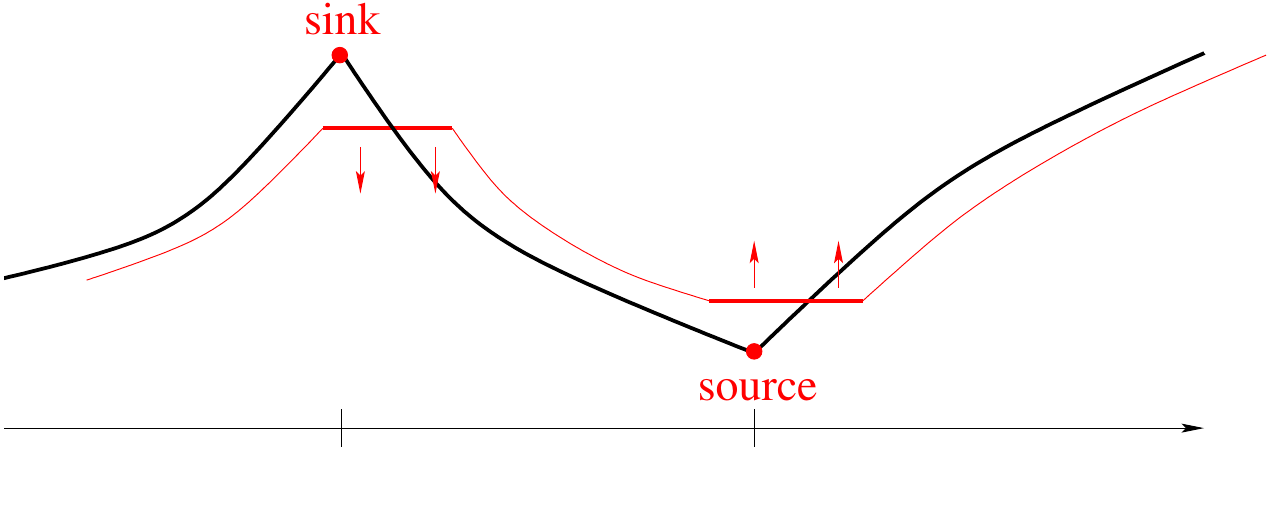}%
\end{picture}%
\setlength{\unitlength}{3947sp}%
\begin{picture}(10140,4043)(-1232,214)
\put(1401,314){\makebox(0,0)[lb]{\smash{\fontsize{20.74}{24}\usefont{T1}{ptm}{m}{n}{\color[rgb]{0,0,0}$x_1$}}}}
\put(4651,314){\makebox(0,0)[lb]{\smash{\fontsize{20.74}{24}\usefont{T1}{ptm}{m}{n}{\color[rgb]{0,0,0}$x_2$}}}}
\put(6001,3164){\makebox(0,0)[lb]{\smash{\fontsize{20.74}{24}\usefont{T1}{ptm}{m}{n}{\color[rgb]{0,0,0}$\bar u$}}}}
\end{picture}%
}
\caption{\small  A solution of (\ref{1})-(\ref{2})  in the stable case where $f< g$.  
Here the initial flux is $f(u)$ for $x<x_1$ and $x>x_2$, and $g(u)$ for $x_1<x<x_2$.
This produces a sink at $x_1$ and a source at $x_2$. }
\label{f:df4}
\end{center}
\end{figure}

Aim of the present paper is to investigate the alternative case, where
\bel{f>g}
f(u)\, >\, g(u)\qquad\qquad  \forall u\in\R\,.
\end{equation}
Notice that in this case the  equation (\ref{3}) is backward parabolic, hence ill posed.  
Therefore we refer to it as the {\it unstable case}.
In fact, {\wen using a similar intuitive argument}, here the jump in the flux
produces a source 
at every point $x_j$ of local maximum, and a sink 
at every point $x_k$ of local minimum (see Fig.~\ref{f:df5}).
In a typical situation, this further increases the total variation of the solution. 

\begin{figure}[htbp]
\begin{center}
\resizebox{.65\textwidth}{!}{
\begin{picture}(0,0)%
\includegraphics{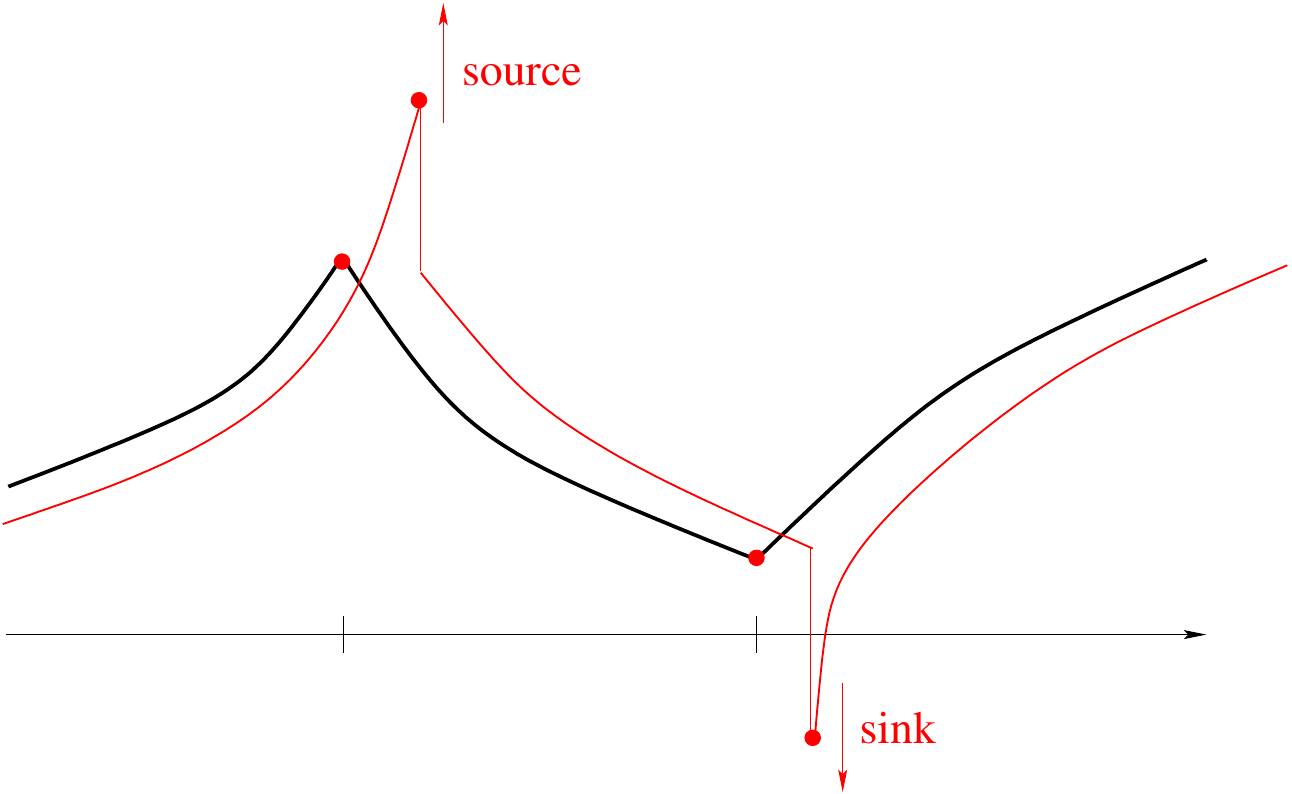}%
\end{picture}%
\setlength{\unitlength}{3947sp}%
\begin{picture}(10319,6339)(-1251,-434)
\put(6226,3164){\makebox(0,0)[lb]{\smash{\fontsize{20.74}{24}\usefont{T1}{ptm}{m}{n}{\color[rgb]{0,0,0}$\bar u$}%
}}}
\put(1376,389){\makebox(0,0)[lb]{\smash{\fontsize{20.74}{24}\usefont{T1}{ptm}{m}{n}{\color[rgb]{0,0,0}$x_1$}%
}}}
\put(4626,389){\makebox(0,0)[lb]{\smash{\fontsize{20.74}{24}\usefont{T1}{ptm}{m}{n}{\color[rgb]{0,0,0}$x_2$}%
}}}
\end{picture}%
}
\caption{\small  A solution of (\ref{1})-(\ref{2})  in the unstable case where $f> g$.  
Here the initial flux is $f(u)$ for $x<x_1$ and $x>x_2$, and $g(u)$ for $x_1<x<x_2$.
This produces a source at $x_1$ and a sink at $x_2$.}
\label{f:df5}
\end{center}
\end{figure}

Throughout this paper, we focus the analysis on the case of convex fluxes, with the following assumptions.
\begi
\item[{\bf (A1)}] {\it The functions $f$ and $g$ are $\C^2$ and strictly convex: $f''(u)\geq c_0$
and $g''(u)\geq c_0$ for some $c_0>0$ and all $u\in\R$. Moreover, $f(u)>g(u)$ for 
all $u\in\R$.}\endi

We consider the Cauchy problem with initial data
\bel{idata} u(0,x)~=~\bar u(x)\eeq
having bounded variation.

{\wen 
To introduce an appropriate definition of weak solution, we observe that for every BV function
$u:\R\mapsto\R$, the distributional derivative $\mu = D_x u$ is a signed Radon measure.
It can be decomposed into a positive and a negative part, as $\mu=\mu_+-\mu_-$.
Given a measurable function $\theta:\R\mapsto [0,1]$, the requirement that $\theta=1$ at points where 
$\mu$ is positive, while $\theta=0$ at points where $\mu$ is negative can now be stated
in terms of the measure identity
\bel{mid1}\mu ~=~\chi_{\strut\{ \theta=1\}} \mu_+ - \chi_{\strut\{\theta=0\}} \mu_-\,.\eeq
As usual, here $\chi_{\strut S}$ denotes the characteristic function of a set $S$.
In the following definition, we apply the identity (\ref{mid1}) to the measure $D_x u(t,\cdot)$, at a.e.~time $t$.
}

{\wen
\begin{definition}\label{Def:1.1} We say that a BV function $u:[0,T]\times \R \mapsto \R$ is a {\bf  weak solution}
to (\ref{1})-(\ref{2}) with initial data (\ref{idata}) if the following holds.
\begi
\item[(i)] The map $t\mapsto u(t,\cdot)$ is continuous from $[0,T]$ into $\L^1_{loc}$ and satisfies
the initial condition (\ref{idata}). 
\item[(ii)] There exists a measurable function 
$\theta: [0,T]\times \R\mapsto [0,1]$ such that
\bel{bta} D_xu(t,\cdot)~=~\chi_{\{\theta=1\}}  \Big[D_xu(t,\cdot)\Big]_+ -\chi_{\{\theta=0\}}  \Big[D_x u(t,\cdot)\Big]_-\qquad\hbox{for a.e.}~t\in [0,T],\eeq
and such that, for every compactly supported test function $\vp\in \C^1_c\bigl( ]0,T[\,\times\R\bigr)$,
\bel{wsol} 
\int_0^T \int  \left\{u\vp_t  + \Big[\theta\, f(u) + \bigl(1-\theta\, \bigr)g(u)\Big]\, \vp_x\right\}\, dxdt ~=~0.
\eeq
\endi
\end{definition}
}

{\wen 
\begin{remark}\label{r:11} 
{\rm For a.e.~$t\in [0,T]$, the function $\theta(t,\cdot) : \R\mapsto [0,1]$ that satisfies the identity in 
(\ref{bta}) is uniquely defined up to a set of $\mu$-measure zero, where $\mu = D_x u(t,\cdot)$.
On the other hand, the integral in (\ref{wsol}) is taken w.r.t.~Lebesgue measure, and it does not change if $\theta $ is modified on a set of Lebesgue measure zero. In particular, the values of $\theta(t,\cdot)$ on the jump part 
or on the Cantor part of the measure $D_x u(t,\cdot)$ are irrelevant.
In Definition~\ref{Def:1.1}, we could thus replace (\ref{bta}) with the requirement that
\bel{thprop}\theta(t,x)~=~\left\{ \bega{rl} 1\quad &\hbox{if}\quad u_x(t,x)>0,\\[2mm]
0\quad &\hbox{if}\quad u_x(t,x)<0.\enda\right.\eeq
We recall that, if $u(t,\cdot)$ has bounded variation, then the gradient $u_x(t,\cdot)$ is well defined for a.e.
$x\in\R$.     This equivalent definition of weak solution is indeed the one used in the paper~\cite{ABS}.
}
\end{remark}
}

In connection with (\ref{wsol}), to write the corresponding Rankine-Hugoniot and Lax admissibility conditions, we recall that a point 
$(\bar t, \bar x)$ is called a {\bf point of approximate jump} for the functions $u,\theta$ if there
exist values $u^-, u^+\in \R$ and $ \theta^-,\theta^+\in \{0,1\}$, together with a speed $\lambda$, such that the following holds.   
Setting
\bel{UT}U(t,x) ~\doteq~\left\{\bega{rl}u^-\quad\hbox{if}~~x<\lambda t,\\
u^+\quad\hbox{if}~~x>\lambda t,\enda\right.\qquad 
\Theta(t,x)~\doteq~\left\{\bega{rl}\theta^-\quad\hbox{if}~~x<\lambda t,\\
\theta^+\quad\hbox{if}~~x>\lambda t,\enda\right.\eeq
one has 
\begin{align}
\label{UJ} 
 \lim_{r\to 0+}~\frac{1}{ r^2} \int_{\bar t-r}^{\bar t+r} \int_{\bar x-r}^{\bar x+r} \bigl| u(t,x)- U(t-\bar t, x-\bar x)\bigr|\, dx\, dt~&=~0,\qquad\\
\label{TJ} 
\lim_{r\to 0+}~\frac{1}{ r^2} \int_{\bar t-r}^{\bar t+r} \int_{\bar x-r}^{\bar x+r} \bigl| \theta(t,x)- \Theta(t-\bar t, x-\bar x)\bigr|\, dx\, dt~&=~0.
\end{align}
In the above setting, if $u,\theta$ provide a weak solution to (\ref{1})-(\ref{2}), a standard argument \cite{Bbook}
yields that, at every point of approximate jump, the left and the right values $u^\pm, \theta^\pm$ and the speed $\lambda$ in 
(\ref{UT}) must satisfy the Rankine-Hugoniot conditions
\bel{RH} \lambda~=~\frac{\bigl[\theta^+f(u^+) + (1-\theta^+) g(u^+) \bigr] - \bigl[\theta^-f(u^-) + (1-\theta^-) g(u^-) \bigr]
}{ u^+-u^-}\,.\eeq
\begin{definition} Let $u,\theta$ be a weak solution to (\ref{1})-(\ref{2}), and let $(\bar t,\bar x)$ be a point of approximate jump.   
We say that this solution satisfies the {\bf Lax admissibility conditions} if 
the left and right states $u^\pm, \theta^\pm$ satisfy the inequalities
\bel{Lax} 
\theta^+f'(u^+) + (1-\theta^+) g'(u^+)  ~\leq~ \lambda~\leq~ \theta^-f'(u^-) + (1-\theta^-) g'(u^-).
\eeq
\end{definition}

Roughly speaking, the main results of this paper show that:
\begi
\item[(i)] Even  for a smooth initial data, infinitely many piecewise smooth admissible  solutions can be constructed.
\item[(ii)] Given the initial profile of $\theta(\cdot)$ at time $t=0$, a unique solution can be selected by further requiring that 
the number of interfaces (i.e.~the number of points
where $\theta$ is discontinuous, 
see Definition~\ref{def:F})
remains as small as possible at $t=0+$ and all future times.
\endi

Scalar conservation laws $ u_t + f_x=0$ where the flux function $f(t, x, u,u_x)$ 
has discontinuous dependence on its arguments have several applications. 
Starting with the work by Gimse and Risebro~\cite{GR91, GR92}, conservation laws with flux function 
discontinuous in space and time have attracted wide attention, becoming the subject of an extensive literature. 
A complete list of references or a comprehensive review are outside the scope of this paper.
Here  we refer to 
\cite{AKR11, AM15, AP05, BV06,  BGS18, CR05, GNPT2006, GS19, KR99, KR95, Panov10, SeguinVovelle03, WS15, WS16, WS18, Towers2000} for a partial list.
See  also  the comprehensive survey paper \cite{And15}. 
For the case where the flux function is discontinuous w.r.t.~the conserved quantity $u$, 
there is also a rich literature, see in particular~\cite{ACnew, BGMS11, BGS13, BGS17, Gimse93, Panov23, Towers2020}.
Finally, as an example where the flux function also depends on $u_x$, 
one can consider hysteretic traffic flow where different flux functions are employed in 
the acceleration and deceleration modes. 
Some models of this kind have been proposed and analyzed in \cite{CF19, Fan24},
where stop-and-go waves are observed in the solutions.
We remark that our model \eqref{1}-\eqref{2} could also be applied in connection with traffic flow.
However, our main goal is to develop a general  framework, valid  for any couple of flux functions $(f,g)$.

{\wen 
The remainder of the paper is organized as follows. 
In Section~\ref{sec:2}, we first prove
some lemmas on admissible jumps. Then we provide  some counterexamples
showing the non-uniqueness of solutions to the Cauchy problem for (\ref{1})-(\ref{2}).
Indeed, new ``spikes" can originate from a smooth solution, 
at arbitrary points in space and time. 
Section~\ref{sec:3} provides solutions to the Riemann problem, for all possible cases of Riemann data.
In Section~\ref{sec:4} we introduce a class of piecewise monotone solutions $u=u(t,x)$, where
$u(t,\cdot)$ is increasing or decreasing on a finite set of open  intervals $I_k(t)=\,\bigl]y_{k-1}(t), y_k(t)\bigr[\,$.  
For an initial data $\bar u$ of this type, Theorem~\ref{t:41} states the global existence of solutions.
 A unique solution can be singled out, imposing the additional requirement that the number of interfaces $y_k(\cdot)$ 
remains as small as possible.  The proof of the theorem is  worked out
in Section~\ref{sec:5}.   Toward this goal, one has to patch together solutions to the conservation laws
\bel{fgclaw} u_t+f(u)_x\,=\,0,\qquad\hbox{or}\qquad u_t+g(u)_x\,=\, 0,\eeq
defined on the various intervals $I_k(t)$.  Each interface
$y_k(t)$ is here determined by solving a particular ODE with possibly discontinuous right hand side.
The construction  relies on a uniqueness result for 
discontinuous ODEs, originally developed in~\cite{B88} and later applied to conservation laws in \cite{BS98}.
Finally, in Section~\ref{sec:6} we hint at some future research directions,
including the emergence of stop-and-go waves as a result of the instability 
of the solutions, in response to small, random perturbations.
We emphasize that the lack of uniqueness of solutions should not be regarded as a drawback of the present model.
In fact, it opens up the possibility of developing new stochastic models,
by introducing a probability distribution on the family of all
piecewise monotone solutions. As in the forthcoming paper \cite{BSu}, this can be done by
allowing new interfaces to arise at random points in space and time.
}

\section{Admissible jumps and examples of non-uniqueness}
\label{sec:2}
\setcounter{equation}{0}

{\wen 
In this section we consider a weak solution of (\ref{1}) and characterize the 
family of jumps that satisfy the Lax condition (\ref{Lax}).  For a conservation law with a single convex flux,
this admissibility condition is sufficient to single out a unique solution to 
the Cauchy problem.
However, this is not the case in the present setting with two fluxes, as it will be shown by counterexamples.
}

We start by proving that the Lax condition in \eqref{Lax} rules out upward jumps. 
\begin{lemma}\label{lm:Jump1}
Let the flux functions $f,g$ satisfy the assumptions {\bf (A1)}. Then any upward jump
violates the Lax admissibility conditions~\eqref{Lax}. 
\end{lemma}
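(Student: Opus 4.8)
The plan is to show that an upward jump, meaning $u^- < u^+$, cannot satisfy \eqref{Lax} regardless of the values $\theta^-,\theta^+\in\{0,1\}$. First I would introduce the shorthand $h^\pm(u) \doteq \theta^\pm f(u) + (1-\theta^\pm)g(u)$, so that $h^\pm$ is one of the two functions $f$ or $g$; in either case $h^\pm$ is $\C^2$ and strictly convex with $(h^\pm)'' \ge c_0 > 0$ by assumption {\bf (A1)}. The Rankine--Hugoniot speed \eqref{RH} is then the slope of the secant
\[
\lambda ~=~ \frac{h^+(u^+) - h^-(u^-)}{u^+ - u^-},
\]
while the Lax conditions \eqref{Lax} read $(h^+)'(u^+) \le \lambda \le (h^-)'(u^-)$.

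The key step is to exploit convexity together with the strict ordering $f > g$. Suppose, for contradiction, that $u^- < u^+$ and \eqref{Lax} holds. From $(h^+)'(u^+) \le \lambda \le (h^-)'(u^-)$ I would first treat the case $h^- = h^+$ (i.e. $\theta^- = \theta^+$, call the common function $h$): then $h'(u^+) \le \lambda \le h'(u^-)$, but strict convexity forces $h'(u^-) < h'(u^+)$ since $u^- < u^+$, a contradiction. For the mixed cases, the crucial observation is that $\lambda$ is a convex combination of slopes of $h^-$ and $h^+$ — more precisely, by the mean value theorem there exist points $\xi^\pm$ with $h^\pm(u^+) - h^\pm(u^-) = (h^\pm)'(\xi^\pm)(u^+-u^-)$ and $\xi^\pm \in (u^-,u^+)$. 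Writing $\lambda(u^+-u^-) = h^+(u^+) - h^-(u^-) = [h^+(u^+) - h^+(u^-)] + [h^+(u^-) - h^-(u^-)]$, the second bracket equals $0$, $(g-f)(u^-)$, or $(f-g)(u^-)$ depending on $(\theta^-,\theta^+)$; when it is nonpositive we get $\lambda \le (h^+)'(\xi^+) < (h^+)'(u^+)$ by strict convexity and $\xi^+ < u^+$, contradicting $(h^+)'(u^+) \le \lambda$. The remaining case, where that bracket is $(f-g)(u^-) > 0$ (so $\theta^- = 0$, $\theta^+ = 1$), is handled symmetrically by splitting instead as $[h^-(u^+) - h^-(u^-)] + [h^+(u^+) - h^-(u^+)]$ with second bracket $(f-g)(u^+) > 0$, giving $\lambda > (h^-)'(\xi^-) > (h^-)'(u^-)$ by $\xi^- > u^-$, contradicting $\lambda \le (h^-)'(u^-)$.

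The main obstacle is organizing the four cases for $(\theta^-,\theta^+)$ cleanly rather than checking each by brute force; the unifying idea is that in every case one can split the secant increment so that one of the two Lax inequalities is violated strictly by a convexity estimate, using that the ``flux mismatch'' term $h^+ - h^-$ evaluated at a single point has a sign fixed by $f > g$ and by $(\theta^-,\theta^+)$. I would also note the degenerate possibility $u^- = u^+$ is excluded since an upward jump by definition has $u^- < u^+$, and that all the inequalities obtained are strict, so no boundary case survives. This completes the contradiction and proves the lemma.
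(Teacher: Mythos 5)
Your proposal is correct and follows essentially the same route as the paper: split into the cases $\theta^-=\theta^+$ (standard convex-flux argument) and the two mixed cases, and in the latter use $f>g$ together with strict convexity to show the jump speed lies strictly below $g'(u^+)$ (when $\theta^-=1,\theta^+=0$) or strictly above $g'(u^-)$ (when $\theta^-=0,\theta^+=1$), contradicting \eqref{Lax}. Your secant-splitting with the mean value theorem is just a fully written-out version of the comparison the paper makes by inspection of the graphs.
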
 

\noindent\textbf{Proof.}
Let $u^-,u^+$ denote the left and right state of the jump.
If $\theta^-=\theta^+$, the conclusion follows from the standard theory of conservation laws with strictly convex flux, where  upward jumps are not admissible. 
When $\theta^-\not=\theta^+$,  two cases must be considered, 
as shown in Fig.~\ref{f:df41}, left. 
\begin{itemize}
\item 
If $\theta^-=1$ and $\theta^+=0$, then $u^-$(and $u^+$)  
lies on the graph of $f$ (and $g$) resp.  Assume $u^-<u^+$. 
Since $f,g$ satisfy {\bf (A1)}, then the speed of the jump is smaller than 
the characteristic speed on its right $g'(u^+)$, violating the Lax condition~\eqref{Lax}. 
\item 
On the other hand, if $\theta^-=0$ and $\theta^+=1$, then $u^-$(and $u^+$)  
lies on the graph of $g$ (and $f$) resp. 
The jump speed is larger than the characteristic on its left $g'(u^-)$,
violating the Lax condition~\eqref{Lax}.  
\endproof
\end{itemize}

\begin{figure}[htbp]
\begin{center}
\resizebox{.8\textwidth}{!}{
\begin{picture}(0,0)%
\includegraphics{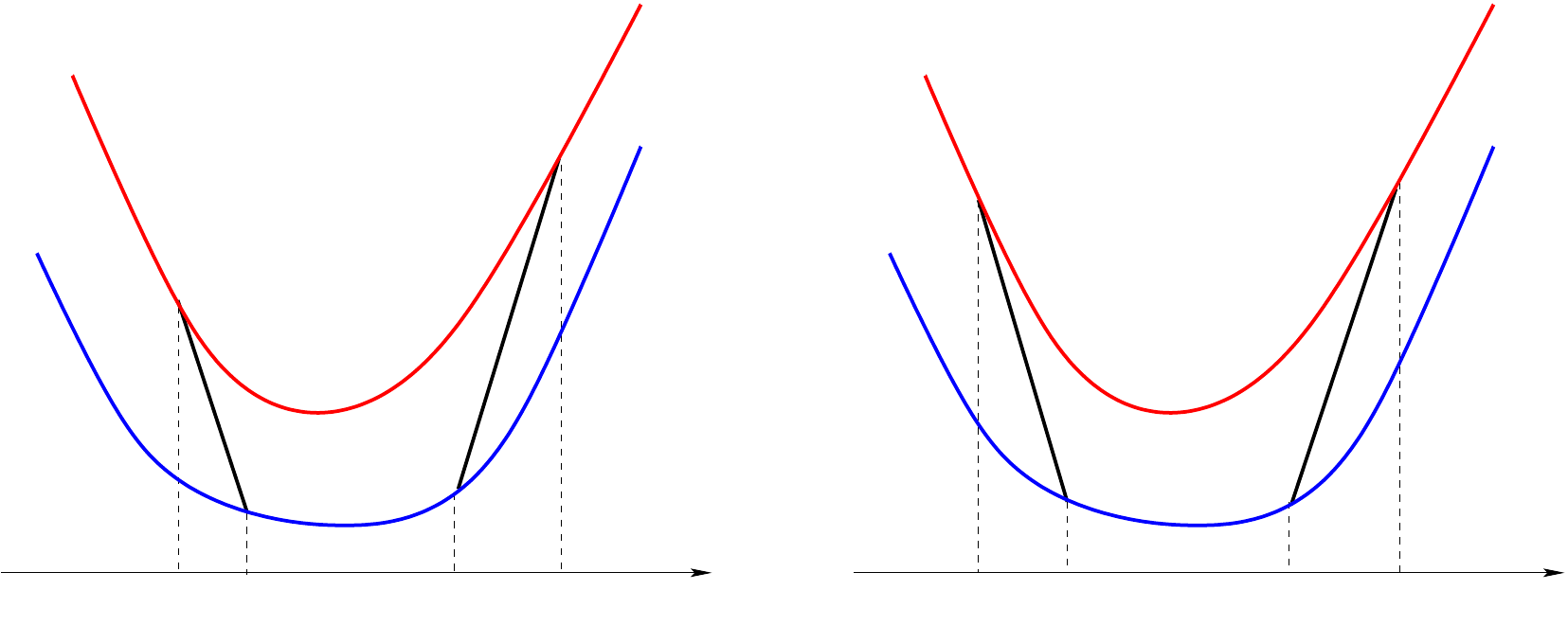}%
\end{picture}%
\setlength{\unitlength}{3947sp}%
\begin{picture}(13224,5278)(1189,-20006)
\put(6261,-17201){\makebox(0,0)[lb]{\smash{\fontsize{20.74}{26.4}\usefont{T1}{ptm}{m}{n}{\color[rgb]{0,0,1}$g$}%
}}}
\put(5421,-15716){\makebox(0,0)[lb]{\smash{\fontsize{20.74}{26.4}\usefont{T1}{ptm}{m}{n}{\color[rgb]{1,0,0}$f$}%
}}}
\put(5776,-19901){\makebox(0,0)[lb]{\smash{\fontsize{20.74}{26.4}\usefont{T1}{ptm}{m}{n}{\color[rgb]{0,0,0}$u^+$}%
}}}
\put(4761,-19871){\makebox(0,0)[lb]{\smash{\fontsize{20.74}{26.4}\usefont{T1}{ptm}{m}{n}{\color[rgb]{0,0,0}$u^-$}%
}}}
\put(3121,-19871){\makebox(0,0)[lb]{\smash{\fontsize{20.74}{26.4}\usefont{T1}{ptm}{m}{n}{\color[rgb]{0,0,0}$u^+$}%
}}}
\put(2421,-19856){\makebox(0,0)[lb]{\smash{\fontsize{20.74}{26.4}\usefont{T1}{ptm}{m}{n}{\color[rgb]{0,0,0}$u^-$}%
}}}
\put(12786,-15791){\makebox(0,0)[lb]{\smash{\fontsize{20.74}{26.4}\usefont{T1}{ptm}{m}{n}{\color[rgb]{1,0,0}$f$}%
}}}
\put(13416,-17261){\makebox(0,0)[lb]{\smash{\fontsize{20.74}{26.4}\usefont{T1}{ptm}{m}{n}{\color[rgb]{0,0,1}$g$}%
}}}
\put(12826,-19871){\makebox(0,0)[lb]{\smash{\fontsize{20.74}{26.4}\usefont{T1}{ptm}{m}{n}{\color[rgb]{0,0,0}$u^-$}%
}}}
\put(11826,-19856){\makebox(0,0)[lb]{\smash{\fontsize{20.74}{26.4}\usefont{T1}{ptm}{m}{n}{\color[rgb]{0,0,0}$u^+$}%
}}}
\put(10081,-19871){\makebox(0,0)[lb]{\smash{\fontsize{20.74}{26.4}\usefont{T1}{ptm}{m}{n}{\color[rgb]{0,0,0}$u^-$}%
}}}
\put(9171,-19886){\makebox(0,0)[lb]{\smash{\fontsize{20.74}{26.4}\usefont{T1}{ptm}{m}{n}{\color[rgb]{0,0,0}$u^+$}%
}}}
\end{picture}%
}
\caption{\small Left: when $\theta^-\not=\theta^+$, upward jumps can never satisfy the Lax
admissibility conditions (\ref{Lax}). Right: when $\theta^-\not= \theta^+$, 
there can also be downward jumps that do not satisfy (\ref{Lax}).}
\label{f:df41}
\end{center}
\end{figure}

\begin{remark}\label{r:}
{\rm We remark that, when $\theta^-\not=\theta^+$, 
there can also be some downward jumps  that do not satisfy the admissibility conditions (\ref{Lax}).  Assuming $u^->u^+$,  as illustrated in Fig.~\ref{f:df41}, right, the conditions  (\ref{Lax})
can be violated in two cases:
\begin{itemize}
\item
When $\theta^-=0$ and $\theta^+=1$, and  $u^+$ is a point such that 
$f'(u^+)$ is bigger than the jump speed. 
\item
When $\theta^-=1$ and $\theta^+=0$, and  $u^-$ is a point such that 
$f'(u^-)$ is smaller than the jump speed. 
\end{itemize}
}\end{remark}

The next Lemma provides the admissibility conditions on the downward jumps.

\begin{lemma}\label{Lm:Jump2}
Let $u^->u^+$ and $\theta^-\not=\theta^+$. 
The downward jumps are Lax admissible in the following two cases:
\begin{itemize}
\item 
When $\theta^-=1$, $\theta^+=0$ and $u^-\ge u^*$, where
$(u^*,f(u^*))$ is the unique  point where the straight line through $(u^+,g(u^+))$ 
touches the graph of $f$ tangentially and $u^*>u^+$.
\item
When $\theta^-=0$, $\theta^+=1$ and  $u^+\le v^*$, where 
$(v^*, f(v^*))$ is the unique point where the straight line through $(u^-,g(u^-))$ 
touches the graph of $f$ tangentially and $v^*<u^-$.
\end{itemize}
\end{lemma}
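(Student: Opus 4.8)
The plan is to analyze the two cases symmetrically, in each case writing down the Rankine--Hugoniot speed $\lambda$ explicitly and checking the Lax inequalities \eqref{Lax} as functions of the free endpoint. Consider first the case $\theta^-=1$, $\theta^+=0$, so that the left state lies on the graph of $f$ and the right state lies on the graph of $g$, with $u^->u^+$. The jump speed is the slope of the secant line joining $(u^+,g(u^+))$ to $(u^-,f(u^-))$, namely $\lambda = \bigl(f(u^-)-g(u^+)\bigr)/(u^--u^+)$, and the Lax conditions \eqref{Lax} read $g'(u^+)\le\lambda\le f'(u^-)$. Geometrically, $\lambda\le f'(u^-)$ says the secant from $(u^+,g(u^+))$ to $(u^-,f(u^-))$ is no steeper than the tangent to $f$ at $u^-$; since $f$ is strictly convex by {\bf (A1)}, this holds precisely when the point $(u^+,g(u^+))$ lies on or below the tangent line to $f$ at $u^-$. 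As $u^-$ increases from $u^+$, the tangent line to $f$ at $u^-$ sweeps downward near $x=u^+$ until it is exactly the tangent line through $(u^+,g(u^+))$; this is the defining property of $u^*$ in the statement, and $u^*>u^+$ follows because $f(u^+)>g(u^+)$ by {\bf (A1)}, so the tangent to $f$ at $u^+$ itself already passes strictly above $(u^+,g(u^+))$. Hence $\lambda\le f'(u^-)$ is equivalent to $u^-\ge u^*$.

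Next I would check that the other Lax inequality, $g'(u^+)\le\lambda$, is automatic once $u^-\ge u^*$; equivalently, that the secant from $(u^+,g(u^+))$ to $(u^-,f(u^-))$ is at least as steep as the tangent to $g$ at $u^+$. Since $f>g$ everywhere, the point $(u^-,f(u^-))$ lies strictly above $(u^-,g(u^-))$, and by strict convexity of $g$ the point $(u^-,g(u^-))$ already lies above the tangent line to $g$ at $u^+$; therefore $(u^-,f(u^-))$ lies strictly above that tangent line, which — since $u^->u^+$ — forces the secant slope to exceed $g'(u^+)$. Thus no additional constraint is generated by the left inequality, and the admissibility region is exactly $\{u^-\ge u^*\}$ as claimed. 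The second case, $\theta^-=0$, $\theta^+=1$, is handled by the mirror-image argument: now $u^-$ lies on the graph of $g$, $u^+$ on the graph of $f$, the speed is $\lambda=\bigl(f(u^+)-g(u^-)\bigr)/(u^+-u^-)$, and the binding inequality is $\lambda\ge f'(u^+)$, i.e.\ the point $(u^-,g(u^-))$ lies on or above the tangent to $f$ at $u^+$; strict convexity of $f$ together with $f>g$ shows this is equivalent to $u^+\le v^*$, where $v^*$ is the tangency point of the line through $(u^-,g(u^-))$, with $v^*<u^-$, while the remaining inequality $\lambda\le g'(u^-)$ holds automatically by convexity of $g$ and $f>g$.

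I would also need to record that the tangent points $u^*$ and $v^*$ are well defined and unique: this is where strict convexity with a uniform lower bound $c_0>0$ on the second derivatives is used, guaranteeing that $f'$ is a strictly increasing bijection of $\R$, so that for each prescribed slope there is a unique tangent line to $f$, and a line through an external point below the graph of $f$ has exactly two tangency candidates, of which exactly one lies to the appropriate side of the given point. The main obstacle, such as it is, is purely bookkeeping: making the convexity comparisons between a secant line and two tangent lines fully rigorous (rather than merely ``geometrically evident''), and being careful that the inequalities in \eqref{Lax} are non-strict so the boundary cases $u^-=u^*$ and $u^+=v^*$ are included. Everything else reduces to monotonicity of $f'$ and $g'$ and the pointwise inequality $f>g$, both supplied by {\bf (A1)}.
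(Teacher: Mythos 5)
Your proof is correct and follows essentially the same route as the paper, which simply states that the lemma follows by direct verification of the Lax condition \eqref{Lax}: you write the Rankine--Hugoniot speed, show that the binding inequality ($\lambda\le f'(u^-)$ in the first case, $\lambda\ge f'(u^+)$ in the second) is equivalent to $u^-\ge u^*$, resp.\ $u^+\le v^*$, via monotonicity of the tangent-line value as the tangency point moves, and check that the remaining inequality is automatic from convexity of $g$ and $f>g$. One small slip to fix: in the first case, $\lambda\le f'(u^-)$ means that $(u^+,g(u^+))$ lies on or \emph{above} (not below) the tangent line to $f$ at $u^-$; this corrected orientation is what your own sweeping argument and the conclusion $u^-\ge u^*$ actually use.
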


\begin{proof}
The proof  is immediate, by verifying the Lax condition~\eqref{Lax}.
\end{proof}
\v

{\wen
The remainder of this section contains various examples where the Cauchy problem has multiple
solutions, all satisfying the Lax admissibility conditions.
}

\begin{figure}[htbp]
\centering
  \includegraphics[scale=0.4]{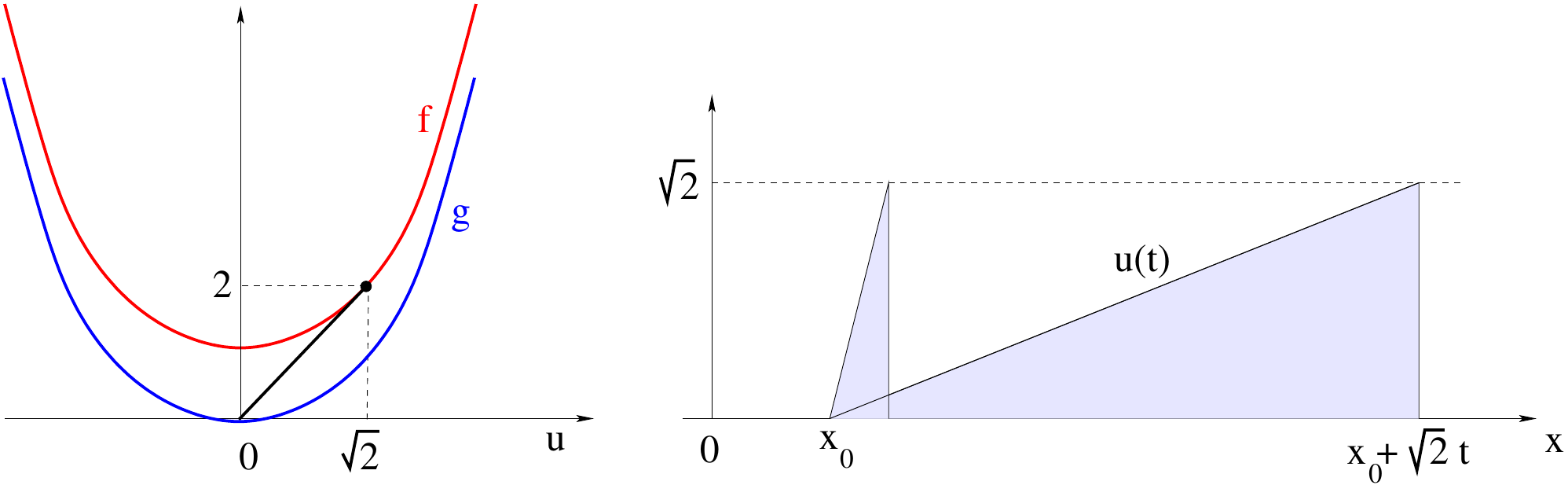}
    \caption{\small The solution described at (\ref{bss1}).}
\label{f:df6}
\end{figure}

{\wen
\begin{example} {\rm 
Consider the conservation law \eqref{1}-\eqref{2} with the two fluxes
\bel{F12}
f(u)\,=\,\frac{u^2}{ 2} +1,\qquad\qquad g(u) \,=\, \frac{u^2}{ 2}\,,\eeq
and with  initial condition
\bel{u=0} u(0,x)\,=\,\bar u(x)\,=\,0.
\eeq
Note that, since $\bar u_x=0$, the corresponding value $\bar\theta(x)\in[0,1]$ can be arbitrary.
Here we show examples of different choices of the initial function $\bar\theta$ that lead to distinct solutions:

(i) first, by choosing  $ \bar\theta(x) \equiv 0$, we obtain the trivial solution
$$u(t,x)\equiv 0, \qquad\qquad \theta(t,x)\equiv 0;$$

(ii) next, choosing an arbitrary point $x_0\in \R$ and setting
$\bar\theta(x)=1$ for $x\le x_0$ while $\bar\theta(x)=0$ for $x_0>0$, 
an  explicit solution (see Fig.~\ref{f:df6}) is found to be
\bel{bss1}
u(t,x)=\left\{ \bega{cl} \ds (x-x_0)/ t \quad &\hbox{if} \quad x\in \bigl[x_0, \,x_0+\sqrt 2 \,t
\bigr],\\[1mm]
0 \quad &\hbox{otherwise,}\enda\right.\eeq
\bel{bss11}\theta(t,x)=\left\{ \bega{cl} \ds 1 \quad &\hbox{if} ~~ x<x_0+\sqrt 2 \,t,\\[1mm]
0 \quad &\hbox{if}~~ x>x_0+\sqrt 2 \,t. \enda\right.\eeq
Indeed, on the region where $x<x_0 +\sqrt 2 \,t$, the function  $u(t,\cdot)$ is increasing 
and provides a solution to $u_t+ f(u)_x=0$.
Moreover, for $x\geq x_0 + \sqrt 2 \,t$ the function  $u(t,\cdot)$ is decreasing.
The jump at $x=y(t)\doteq x_0 + \sqrt 2 \,t$ satisfies the Rankine-Hugoniot equation
$$\dot y(t)~=~\frac{f\bigl(u(t, y(t)-) \bigr) - g\bigl(u(t, y(t)+) \bigr)}{ 
u(t, y(t)-) - u(t, y(t)+)}~=~\sqrt 2;$$

(iii) yet another family of solutions is obtained by setting $\bar\theta(x)=0$ for $x\le x_0$ and $\bar\theta(x)=1$ for $x_0>0$, 
and we get
\bel{bss2}
u(t,x)~=~\left\{ \bega{cl} \ds (x-x_0)/t \qquad &\hbox{if} \quad x\in \bigl[x_0-\sqrt 2 \,t\,,~x_0
\bigr],\\[1mm]
0 \qquad &\hbox{otherwise},\enda\right.\eeq
\bel{bss22}\theta(t,x)=\left\{ \bega{cl} \ds 1 \quad &\hbox{if} ~~ x>x_0-\sqrt 2 \,t,\\[1mm]
0 \quad &\hbox{if}~~ x<x_0-\sqrt 2 \,t. \enda\right.\eeq

}\end{example}
}

\bigskip


{\wen 
The next example shows that, even when both initial data $(\bar u, \bar \theta)$ are assigned, 
multiple solutions can still arise. Indeed, new spikes can appear in a neighborhood of any point 
where a solution $u(t,\cdot)$  is continuous and decreasing.

\begin{example}\label{ex:22}
{\rm Consider again the two fluxes $f,g$ in (\ref{F12}), and the initial data
\bel{idex2} \bar u(x)~=~\left\{ \bega{rl} 1\quad &\hbox{if}\quad x<-1,\\
-x\quad &\hbox{if}\quad x\in [-1,1],\\
-1\quad &\hbox{if}\quad x>1.\enda\right.
\qquad \qquad
 \bar\theta(x)=0\,.\eeq
For $t\in [0,1[\,$, a  solution to the Cauchy problem with initial data $\bar u$ can be constructed by the method of characteristics:
\bel{idex3} 
u_1(t,x)~=~\left\{ \bega{cl} 1\quad &\hbox{if}\quad x<-1+t,\\[1mm]
\frac{x}{ t-1}\quad &\hbox{if}\quad x\in \left[ \frac{-1}{ 1-t},\, \frac{1}{ 1-t}\right],\\[1mm]
-1\quad &\hbox{if}\quad x>1-t.\enda\right.\qquad\qquad \theta(t,x)=0.\eeq

Next, on a small initial time interval $t\in [0,t_0]$ we construct an additional solution $u_2=u_2(t,x)$ to the same Cauchy problem 
by inserting a pair of  spikes in a neighborhood of the origin. Let
$$\bigl(v_1, f(v_1)\bigr)\,=\, \left(-\sqrt 2, 2\right),\qquad\qquad \bigl(v_2, f(v_2)\bigr)\,=\, \left(\sqrt 2, 2\right)$$
 be the points where the tangent lines to the graph of $f$ pass through
$\bigl(u^*, g( u^*)\bigr)=(0,0)$.
Our second solution will have  the form
\bel{idex4} u_2(t,x)\,=\,\left\{ \bega{cl} u_1(t,x)   \quad &\hbox{if}\quad |x|> z(t),\\[1mm]
x/t 
\quad &\hbox{if}\quad |x|\leq z(t),\enda\right.\qquad
\theta(t,x)\,=\,\left\{ \bega{cl}
{\wen 0}\quad &\hbox{if} \quad |x|\ge z(t),\\[1mm]
{\wen 1}\quad &\hbox{if} \quad |x|< z(t).
\enda\right.\eeq
Here $\pm z(t)$ are the positions of the two shocks. 
{\wen Their speeds} are determined by the Rankine-Hugoniot equation
\bel{RHz}\bega{rl}
\dot z(t)&\ds=~\frac{f(z/t) - g( z/(t-1)) }{ z/t ~ - ~ z/(t-1) } 
~=~ \frac{(z^2+2t^2) (1-t)^2 - z^2 t^2}{ 2  z  t (1-t)}\,.
\enda\eeq
Since $z(0)=0$, setting $z(t) = \sigma(t)\,t$, a leading order expansion yields
$$t\, \dot\sigma(t)+ \sigma~=~\frac{\sigma}{ 2} + \frac{1}{ \sigma}+\O(1)\cdot t.$$
Letting $t\to 0+$ this yields $\sigma(0)=\sqrt 2$. As shown in Figure~\ref{f:df80} the solution 
thus satisfies
$$z(t)~=~\sqrt{2} \, t + \O(1)\cdot t^2, \qquad \qquad u_2\bigl(t, z(t)-\bigr)~= ~\sqrt 2 + \O(1) \cdot t.$$

\begin{figure}[htbp]
\centering
  \includegraphics[scale=0.4]{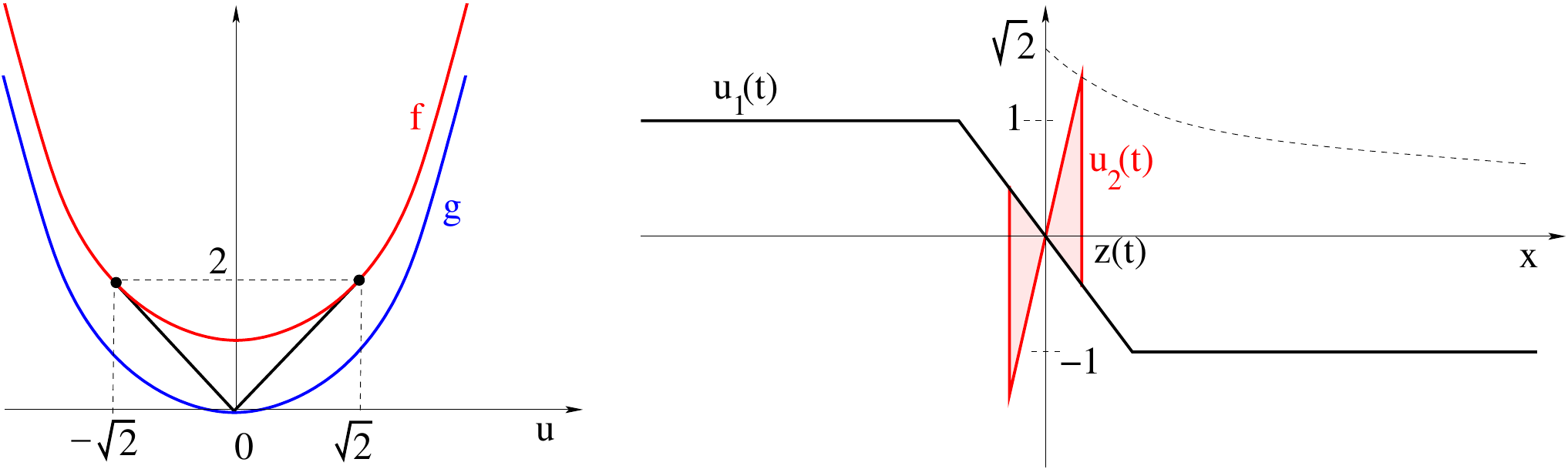}
 \caption{\small Left:  The tangent lines from the origin to the graph of $f$.
Right: the solutions $u_1$ at (\ref{idex2}) and $u_2$ at (\ref{idex3}).}
\label{f:df80}
\end{figure}

On the other hand, we observe that in the case where $u_x>0$ the above construction breaks down, because there are no
points $\bigl(v_i, g(v_i)\bigr)$, $i=1,2$, where the tangent line to the graph of $g$  goes through a point $(u^*, f(u^*))$.
}
\end{example}}

\section{Solutions to the Riemann problems}
\label{sec:3}
\setcounter{equation}{0}

{\wen
Before constructing the solutions of Riemann problems, 
we first introduce the concept of {\it interface}. 
The uniqueness result that will be stated in Theorem~\ref{t:41} 
relies on a crucial assumption on the 
number of interfaces.}


\begin{definition}\label{def:F}
Let $u=u(x)$ be a piecewise monotone function, and let $\theta=\theta(x)$ be a piecewise constant function. 
We say that a set of points
$$y_1\,<\,y_2\,<~\cdots~< y_N$$
is a set of {\bf interfaces} associated with $u$ and $\theta$  
if it contains all the jumps in $\theta$
and,
for every open interval $J_i=\,] y_i,\,  y_{i+1}[\,$, $i=0,\ldots,N$, there holds:
\begi
\item If $\theta(x)=1$ on $J_i$, then 
$u$ is monotone increasing. Namely
$$ y_i<x<x'< y_{i+1}\qquad\implies\qquad  u(x)\leq  u(x').$$
\item If $\theta(x)=0$ on $J_i$, then 
$ u$ is monotone decreasing. Namely
$$ y_i<x<x'< y_{i+1}\qquad\implies\qquad  u(x)\geq  u(x').$$
\endi
For notational convenience, we here set $y_0\doteq -\infty$, $y_{N+1}\doteq +\infty$.
%
\end{definition}

\medskip

{\wen 
\begin{remark}\label{rmk2}{\rm 
Note that the set of interfaces includes all the points where $\theta$ is discontinous.
In particular, it includes all the downward jumps in $u$ where $\theta=1$ on both sides. 
However, this set does not include the downward jumps in $u$ where $\theta=0$ on both sides. }
\end{remark}
}

In Section~\ref{sec:4}  we will show that a unique solution can be selected by requiring that 
the number of interfaces remains as small as possible.

\bigskip

We now describe a general procedure to construct a solution for piecewise constant initial data:
\bel{RD} u(0,x)\,=\,\left\{\bega{rl} u^-\quad \hbox{if}\quad x<0,\\[1mm]
u^+\quad \hbox{if}\quad x>0,\enda\right.\qquad
 \theta(0,x)\,=\,\left\{\bega{rl} \theta^-\quad \hbox{if}\quad x<0,\\[1mm]
\theta^+\quad \hbox{if}\quad x>0.\enda\right.\eeq
Notice that, since the initial data is constant for $x<0$ and $x>0$,  for any given $u^-, u^+$,
we can arbitrarily choose $\theta=0 $ or $\theta=1$ on these half lines.
We will thus consider four main cases, depending on the choice of $\theta^-,\theta^+\in\{0,1\}$.
\v
CASE 1: $\theta^-=\theta^+=1$. 

In this case, for any $u^-, u^+$, a solution to the Riemann 
problem is obtained by letting  $u=u(t,x)$ be the solution to the scalar conservation law
\bel{clawf} u_t+f(u)_x~=~0\eeq
with the same Riemann data. 
{\wen
 Since the flux $f$ is convex, we have 2 sub-cases.

CASE 1A:   $u^-< u^+$. 
In this case,  the solution of (\ref{clawf}) contains a centered rarefaction fan,  and 
\bel{the1}\theta(t,x)=1\qquad \hbox{for all}~~ t,x.\eeq    
In this case, no interfaces are present. 

We remark that CASE 1A can occur at $t=0$, if the initial datum has an upward jump. 
However, it cannot occur as the result of any wave interaction at a later time $t>0$.

\medskip

CASE 1B: $u^->u^+$.  In this case the solution $u$ 
has a single downward jump that satisfies the Lax condition \eqref{Lax}. 
Here the jump is an interface. We set 
\bel{the1B}\theta(t,x)=\begin{cases}1 &\hbox{if}~~ x\not = \lambda t\,,\\
0 &\hbox{if}~~  x=\lambda t\,,
\end{cases}\eeq
where $\lambda$ is the Rankine-Hugoniot speed of the shock.
With this choice of $\theta$, the condition (\ref{bta}) in Def.~\ref{Def:1.1} is satisfied.  
}

\begin{figure}[htb]
\centering
  \includegraphics[scale=0.35]{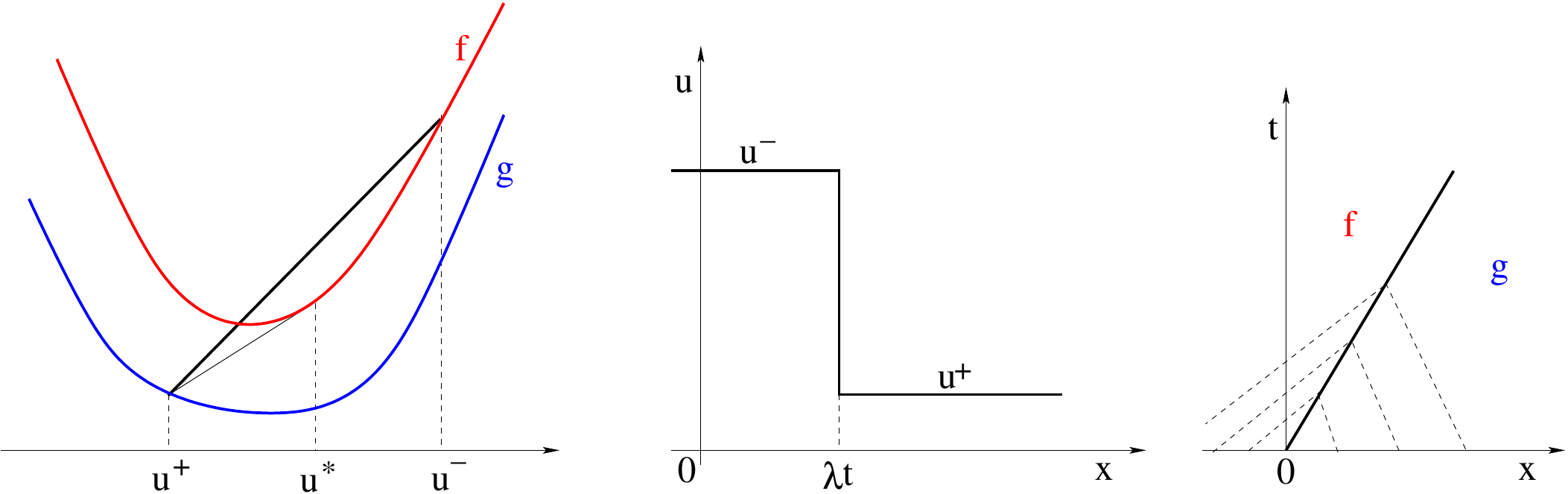}
    \caption{\small Solution of the Riemann problem in Case 2A. Left: the states $u^+<u^*<u^-$. 
    Center: the solution $u=u(t,x) $ at time $t>0$.  Right: the characteristics in the $t$-$x$ plane.
    Here the flux is $f$ or $g$ respectively to the left and to the right of the shock located at $x=\lambda t$.}
\label{f:case2A}
\end{figure}

\begin{figure}[htb]
\centering
  \includegraphics[scale=0.35]{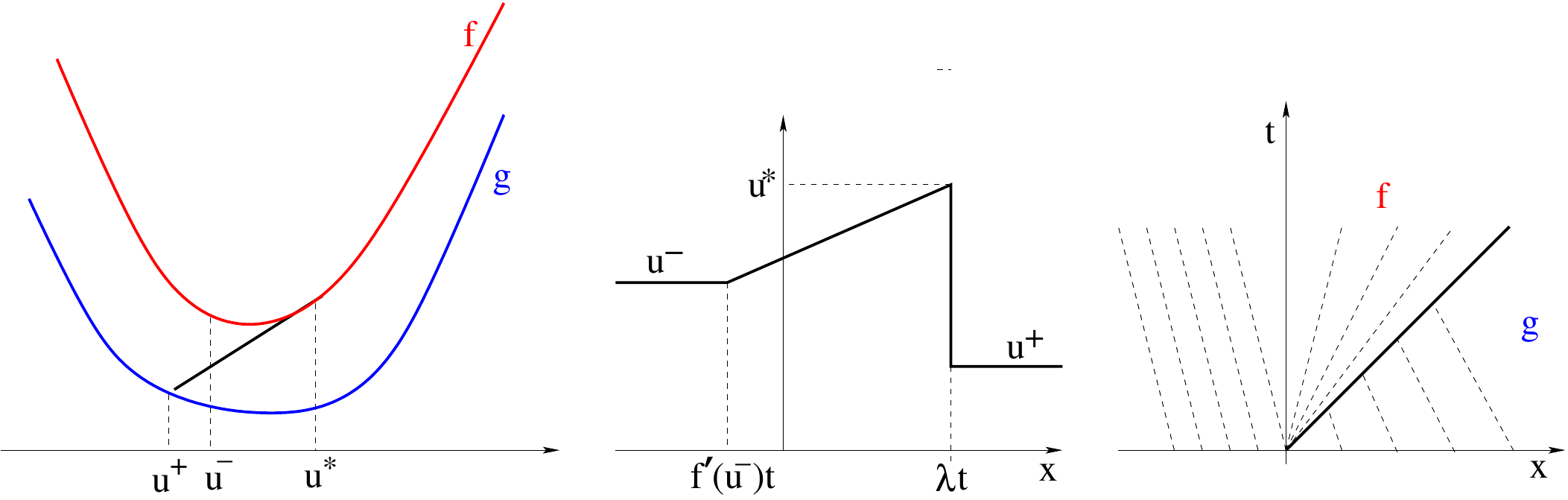}
    \caption{\small Solution of the Riemann problem in Case 2B. Left: the states $u^+<u^-<u^*$. 
    Center: the solution $u=u(t,x) $ at time $t>0$.  Right: the characteristics in the $t$-$x$ plane.
    Here the flux is $f$ or $g$ respectively to the left and to the right of the shock located at $x=\lambda t$.}
\label{f:case2B}
\end{figure}

\v
CASE 2: $\theta^-=1$, $\theta^+=0$.   

As a preliminary, consider the  solution $u=u^\flat(t,x)$ of the Cauchy problem
\bel{cp1}
u_t + f(u)_x~=~0,\qquad \qquad u(0,x)\,=\,\left\{ \bega{cl} u^-\quad&\hbox{if}~~x<0,\\[1mm]
+\infty\quad&\hbox{if}~~x>0.\enda\right.\eeq
Notice that this solution contains a single, infinitely large rarefaction. It can be implicitly defined as
\bel{uf1}
u^\flat(t,x)~=~\left\{ \bega{ll} u^-\qquad &\hbox{if}\quad x/t\leq f'(u^-),\\[1mm]%
w\qquad &\hbox{if}\quad x/t= f'(w)>f'(u^-).\enda\right.\eeq

Next, as shown in Fig.~\ref{f:case2A} (left), let 
$(u^*, f(u^*))$ be the unique point where the straight line through $(u^+, g(u^+))$ touches the graph of $f$ 
tangentially, and $u^*>u^+$.   
Two sub-cases must be considered.
\v
CASE 2A: $u^*< u^-$  (see Fig.~\ref{f:case2A}).  In this case, the solution consists of a single shock:
$$u(t,x)~=~\left\{\bega{rl} u^- \quad \hbox{if}\quad x<\lambda t,\\[1mm]
u^+\quad \hbox{if}\quad x>\lambda t,\enda\right.
\qquad\qquad
  \theta(t,x)~=~\left\{\bega{rl} 1\quad \hbox{if}\quad x<\lambda t,\\[1mm]
0\quad \hbox{if}\quad x\ge \lambda t,\enda\right.
$$
where the shock speed is
\bel{la1}\lambda~=~\frac{f(u^-) - g(u^+)}{ u^--u^+}\,.\eeq
 By Lemma~\ref{Lm:Jump2},  this jump is Lax admissible. 
{\wen  Note that characteristics impinge on the shock transversally,
 both from the left and from the right.} 
\v
CASE 2B: $u^-\leq  u^*$  (see Fig.~\ref{f:case2B}).  
In this case, the solution consists of a centered rarefaction $u^\flat$ on the left,
right next to a shock with left and right states $(u^*,u^+)$. 
We have
$$u(t,x)~=~\left\{\bega{cl} u^\flat(t,x)\qquad \hbox{if}\quad x<\lambda t,\\[1mm]
 u^+ 
\qquad \qquad \hbox{if}\quad x >\lambda t,\enda\right.$$
where the shock travels with the speed 
\bel{la2}{ \lambda}
~=~\frac{f(u^*) - g(u^+)}{ u^*-u^+}~=~f'(u^*)\,.\eeq
By Lemma~\ref{Lm:Jump2} the jump is Lax admissible.
{\wen  Note that now  the characteristics impinge on the shock transversally from the right,  
but are  tangent to the shock from the left.  }

In both cases 2A and 2B the location of the jump is an interface, and we have
\bel{thela}\theta(t,x)~=~\left\{\bega{rl} 1\qquad\hbox{if}~~x<\lambda t,\\[1mm]
0\qquad\hbox{if}~~x \ge \lambda t.\enda\right.\eeq

\begin{figure}[htb]
\centering
  \includegraphics[scale=0.35]{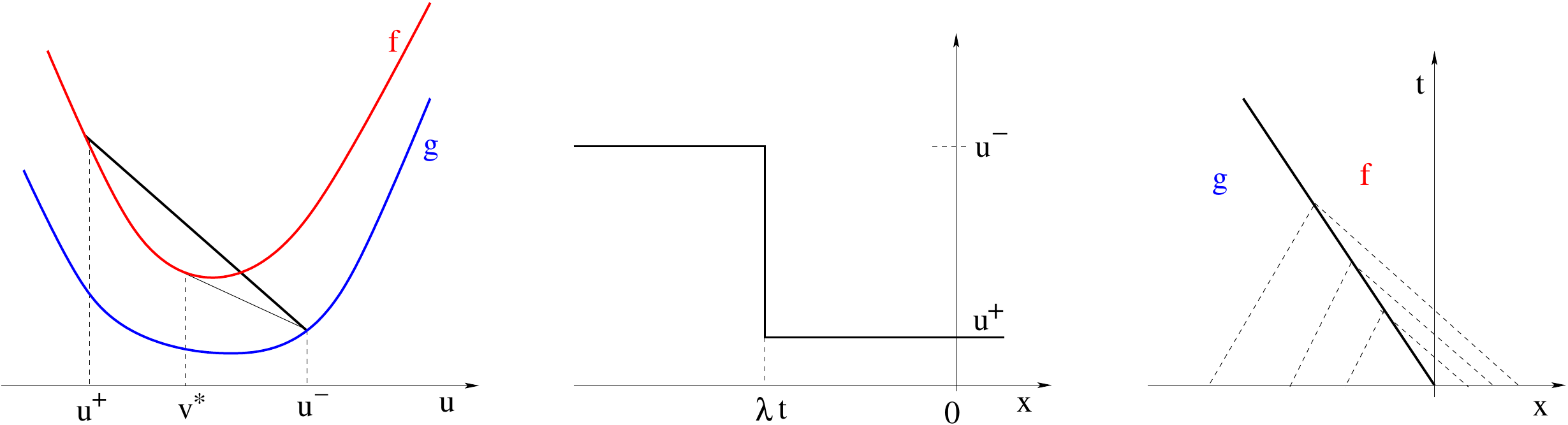}
    \caption{\small Solution of the Riemann problem in Case 3A. Left: the states $u^+<v^*<u^-$. 
    Center: the solution $u=u(t,x) $ at time $t>0$.  Right: the characteristics in the $t$-$x$ plane.
    Here the flux is $g$ or $f$ respectively to the left and to the right of the shock located at $x=\lambda t$.}
\label{f:case3A}
\end{figure}

\begin{figure}[htb]
\centering
  \includegraphics[scale=0.35]{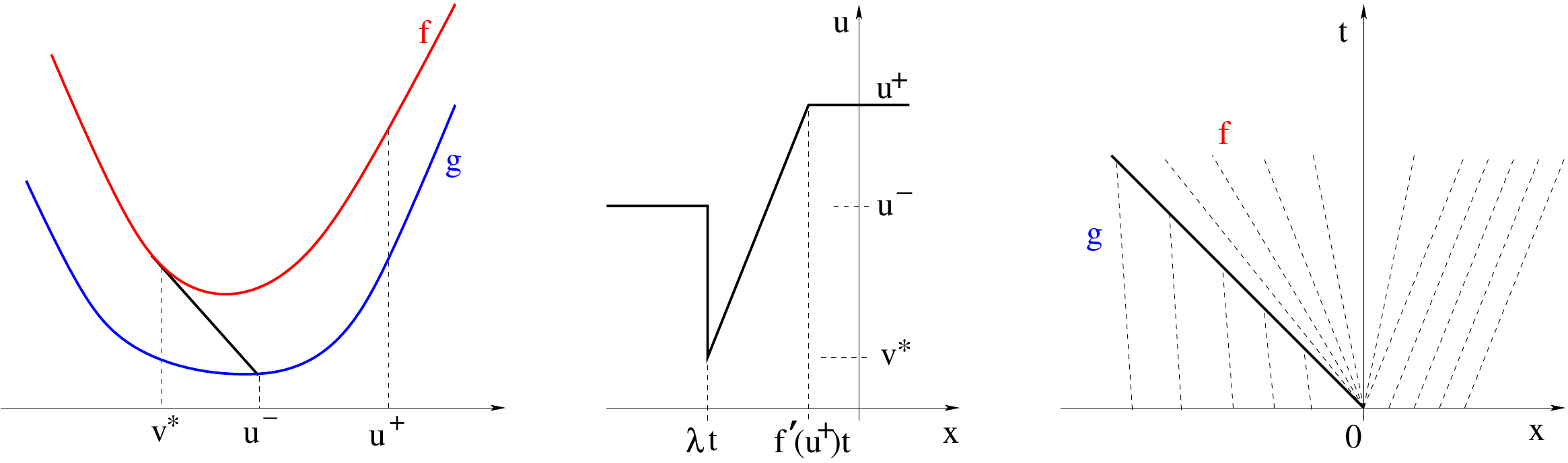}
    \caption{\small Solution of the Riemann problem in Case 3B. Left: the states $u^+<v^*<u^-$. 
    Center: the solution $u=u(t,x) $ at time $t>0$.  Right: the characteristics in the $t$-$x$ plane. 
    Here the flux is $g$ or $f$ respectively to the left and to the right of the shock located at $x=\lambda t$.}
    \label{f:case3B}
\end{figure}

CASE 3: $\theta^-=0$, $\theta^+=1$.   

{\wen This case is symmetric to CASE 2.}  
As a preliminary, consider the  solution $u=u^\sharp(t,x)$ of the Cauchy problem
\bel{cp11}
u_t + f(u)_x~=~0,\qquad \qquad u(0,x)\,=\,\left\{ \bega{cl} -\infty\quad&\hbox{if}~~x<0,\\[1mm]
u^+\quad&\hbox{if}~~x>0.\enda\right.\eeq
Notice that this solution contains a single, infinitely large rarefaction. It can be implicitly defined as
\bel{ug1}
u^\sharp(t,x)~=~\left\{ \bega{ll} 
w\qquad &\hbox{if}\quad x/t= f'(w)<f'(u^+),\\[1mm]
u^+\qquad &\hbox{if}\quad x/t\geq f'(u^+).
\enda\right.\eeq

Next, as shown in Fig.~\ref{f:case3A} (left), let 
 $(v^*, f(v^*))$ be the unique point where the straight line
through $(u^-, g(u^-))$ touches the graph of $f$ 
tangentially, and $v^*<u^-$. 
Two sub-cases must be considered.
\v
CASE 3A: $u^+< v^*$ (see Fig.~\ref{f:case3A}). 
In this case, the solution consists of a single shock:
%
$$
u(t,x)~=~\left\{\bega{rl} u^-\quad \hbox{if }~ x<\lambda t,\\[1mm]
u^+\quad \hbox{if }~ x>\lambda t,\enda\right.
\qquad 
\mbox{where}\quad 
\lambda~=~\frac{f(u^+) - g(u^-)}{ u^+-u^-}\,.
$$
{\wen  Note that characteristics impinge on the shock transversally, both from the left and from the right.}

CASE 3B: $u^+\geq v^*$ (see Fig.~\ref{f:case3B}). In this case, the solution consists of 
a centered rarefaction and a shock:
$$
 u(t,x)~=~\left\{\bega{cl} u^- \quad &\hbox{if }~ x<\lambda t,\\[1mm]
 u^\sharp(t,x)\quad &\hbox{if }~ x>\lambda t,\enda\right.
 \qquad
\mbox{where} \quad \lambda=\frac{f(v^*) - g(u^-)}{ v^*-u^-}=f'(v^*)\,.
$$
%
%
{\wen  
 Note that  the characteristics impinge on the shock transversally from the left,  
but are  tangent to the shock from the right.  }

In both cases, the jump is Lax admissible by Lemma~\ref{Lm:Jump2},  and its location is 
an interface. 
We have
$$\theta(t,x)~=~\left\{\bega{rl} 0\quad &\hbox{if}~~x \le \lambda t,\\[1mm]
 1\quad &\hbox{if}~~x>\lambda t.\enda \right.$$
\v
CASE 4: $\theta^-=\theta^+=0$. We study two sub-cases. 

CASE 4A: $u^-> u^+$. 
In this case, a solution to the Riemann 
problem is obtained by letting  $u=u(t,x)$ be the solution to the scalar conservation law
\bel{clawg} u_t+g(u)_x~=~0\eeq
with the same Riemann data, and taking 
\bel{the2} \theta(t,x)=0\qquad \hbox{for all}~~ t,x.\eeq    
Indeed, if $u^-\geq u^+$ the solution of (\ref{clawg}) is monotone decreasing  (piecewise constant with a single downward jump).
Hence $u_x(t,x)\leq 0$ at a.e.~point $(t,x)$ and the choice (\ref{the2}) satisfies (\ref{bta}).
However, note that this jump is not an interface. 

\v
CASE 4B: $u^-< u^+$.
Notice that in this case the admissible solution to (\ref{clawg}) contains a centered rarefaction,
hence it cannot be regarded as a solution to (\ref{1}). 
For this reason, we consider the points $(v^*, f(v^*))$, $( u^*, f(u^*))$ where the 
lines through the points $(u^-, g(u^-))$ and $(u^+, g(u^+))$ are tangent to the graph of $f$
(see Fig.~\ref{f:case4B}, left), with $v^*<u^-$ and $u^*>u^+$. 
As shown in Fig.~\ref{f:case4B}, center, the solution to the Riemann problem consists of a centered $f$-rarefaction, 
enclosed between two shocks. It can be implicitly defined as
\bel{RP2B} 
u(t,x)~=~\left\{ \bega{cl} u^-\quad &\hbox{if}\quad x/t < f'(v^*),\\[1mm]
w\quad &\hbox{if}\quad  
x/t = f'(w),\quad   f'(v^*) <x/t <f'(u^*),\\[1mm]
 u^+\quad &\hbox{if}\quad x/t > f'(u^*).\enda\right.\eeq
Here we can take
\bel{RP2BT}\theta(t,x)~=~\left\{ \bega{cl} 1\quad &\hbox{if} ~~ f'(v^*) <x/t <f'(u^*),\\[1mm]
 0\quad &\hbox{otherwise}.\enda\right.\eeq
Notice that the choice of $(v^*, u^*)$ implies that both jumps satisfy the Rankine-Hugoniot equation (\ref{RH}). 
Moreover they are admissible by Lemma~\ref{Lm:Jump2}. 
Both jumps 
are interfaces. 
The behavior of characteristics, to the left and to the right of these two shocks, 
is illustrated in Fig.~\ref{f:case4B}, right.
%

\begin{figure}[htbp]
\centering
  \includegraphics[scale=0.35]{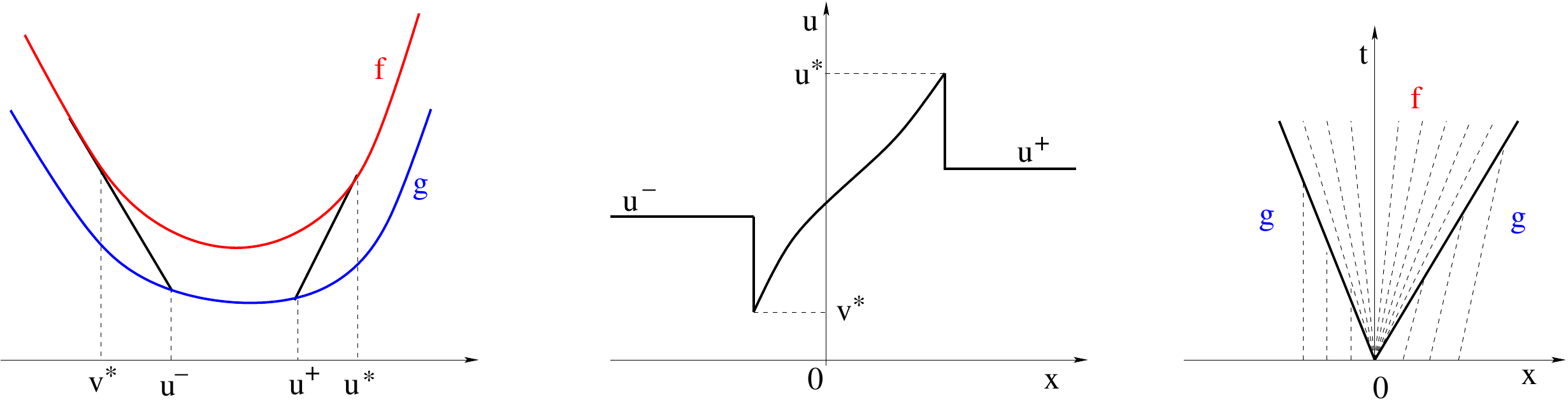}
    \caption{\small Solution of the Riemann problem in Case 4B. Left: the points $v^*, u^*$ are determined by constructing 
lines through the points $(u^-, g(u^-))$ and $(u^+, g(u^+))$ which are tangent to the graph of $f$.
Middle: a graph of the solution at time $t=1$. Right: the characteristics in the $t$-$x$ plane.
  }
    \label{f:case4B}
\end{figure}

{\wen
We remark that, similar to CASE 1A, here CASE 4B can only occur at $t=0$ with certain initial data,
but it will not occur as the result of any wave interaction at a later time $t>0$.
For instance, assume that the two interfaces $y(t)$ and $z(t)$ interact at a time $t=\tau$, with $\theta(t,x)=1$
for $y(t)<x<z(t)$,  while $\theta(t,x)=0$ for $x\leq y(t)$ and for $x\geq z(t)$, as long as $t<\tau$.  
Since $y(\tau)=z(\tau)$,  the speeds of the interfaces must satisfy $\dot y(\tau-)\ge \dot z(\tau-)$.
Since both interfaces satisfy the Lax conditions, this implies $f'(u(\tau, y(\tau)-))\ge f'(u(\tau, y(\tau)+))$. By the monotonicity of $f'$ we conclude that $u(\tau, y(\tau)-)\ge u(\tau, y(\tau)+)$, which rules out \textbf{Case 4B}.

\begin{remark}\label{rem:2*}{\rm
Taking the limit of CASE 4B  as $u^+ \rightarrow u^-$, the initial data approach 
the one in CASE 4A.   However,  the corresponding solutions of the Riemann problems approach a different limit.
This yields two admissible solutions for the case where $\theta^-=\theta^+=0$ and $u^+=u^-$.
In CASE 4A we obtain a constant solution, while as a limit of CASE 4B we obtain a  solution with
two spikes.  This confirms the instability of the model.}
\end{remark}

}

\begin{figure}[htbp]
\centering
  \includegraphics[scale=0.35]{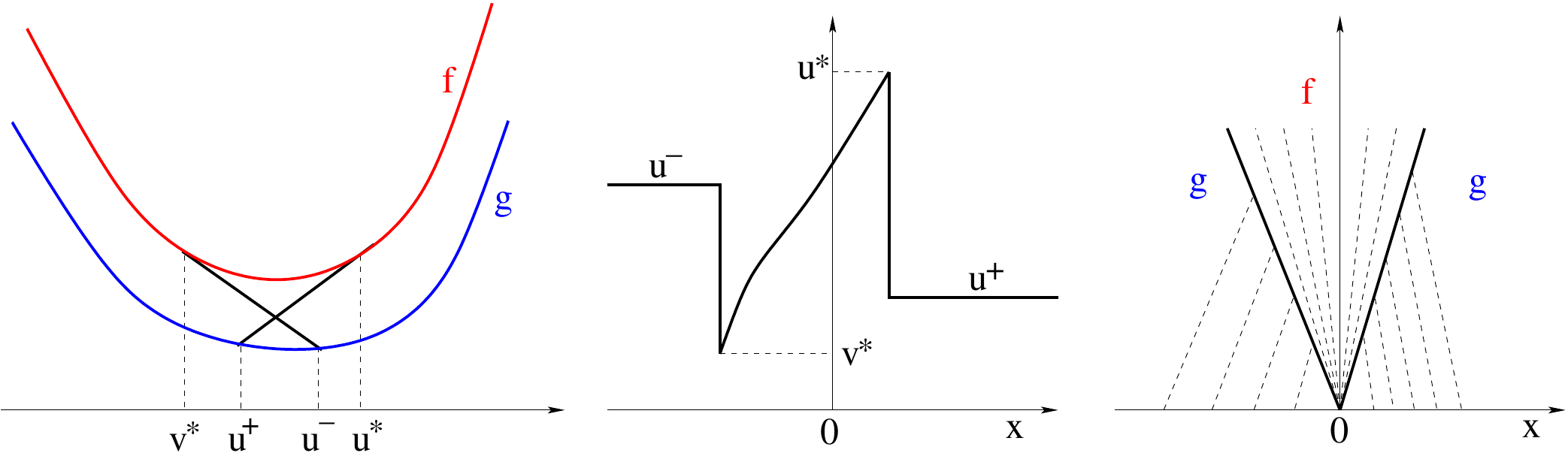}
    \caption{\small  A second admissible solution to the Riemann problem considered 
    in CASE 4A, containing two interfaces. 
  }
    \label{f:case4AAA}
\end{figure}

{\wen
\begin{remark} \label{r:21} {\rm As shown in Fig.~\ref{f:case4AAA}, left,
it is possible to have $u^->u^+$ but still $v^*<u^*$. 
In this case, one can again construct a solution to the Riemann problem of the form 
(\ref{RP2B})-(\ref{RP2BT}).  This is a second admissible solution, 
 different from the one 
constructed in CASE~4A where $\theta(t,x)=0$ for all $t,x$ and no interfaces were present.
This second solution contains two interfaces.

We thus have an example of a Riemann problem with two distinct admissible solutions.
To choose between the two, one may give preference to the one
with the least number of interfaces, namely the solution considered in CASE~4A.  }
\end{remark} 
}

\section{Piecewise monotone solutions to the Cauchy problem}
\label{sec:4}
\setcounter{equation}{0}
Throughout the following we assume that the two fluxes $f,g$  satisfy the assumptions {\bf (A1)}.
In connection with the conservation law with discontinuous flux (\ref{1})-(\ref{2}), we consider initial data comprising of
\begi
\item[{\bf (ID1)}] {\it An initial profile
\bel{ubar} u(0,x)~=~\bar u(x),\eeq
where $\bar u\in \L^\infty(\R)$ is a piecewise monotone function.}

\item[{\bf (ID2)}]  {\it A piecewise constant function $\bar\theta:\R\mapsto \{0,1\}$, 
and a finite set of  interfaces 
$$\bar y_1\,< \, \bar y_2 \,< ~\cdots~< \,\bar y_N$$
associated with $\bar u$ and $\bar\theta$, as in Definition~\ref{def:F}. 
}
\endi


We remark that the set of interfaces $\bar y_j$ is an integral part of the initial data.
If $\bar u$ is constant over an interval $[a,b]$, there are infinitely many ways to choose the 
points $\bar y_j$ according to {\bf (ID2)}. Different choices lead to distinct solutions
of the conservation law (\ref{1}).


\v
\begin{definition}\label{def:31} Let the fluxes $f,g$ satisfy {\bf (A1)}. 
We say that $u:[0,T]\mapsto\L^1_{loc}(\R)$ is a {\bf piecewise monotone admissible solution} 
to the Cauchy problem (\ref{1})-(\ref{2}) with initial data $\bar u$, $\bar \theta$, $\{\bar y_i\}$ 
as in {\bf (ID1)-(ID2)}, if the following holds.
\begi
\item[(i)] The map $t\mapsto u(t,\cdot)$ is continuous from $[0,T]$ into $\L^1_{loc}(\R)$,
and satisfies the initial condition (\ref{ubar}).  
{\wen 
Furthermore, $\forall t \in [0,T]$, $u(t,\cdot) $ is piecewise monotone.}

\item[(ii)] There exists a function $\theta=\theta(t,x)\in \{0,1\}$, with the following properties. 
The map $t\mapsto \theta(t,\cdot)$ is continuous with values in $\L^1_{loc}(\R)$, 
and satisfies the initial condition $\theta(0,x)=\bar\theta(x)$.

Moreover, for every $t\in [0,T]$ there exists a finite set of interfaces (see Definition~\ref{def:F})
$$y_1(t)\,<\,y_2(t)<~\cdots~<\,y_{\strut N(t)}(t)$$
associated with $u(t,x)$ and $\theta(t,x)$.
\item[(iii)] The conservation equation is satisfied in distributional sense:
\bel{claw}
\int_0^T \left\{
\int_{\{ \theta=1\}}  \bigl[ u \phi_t+  f(u) \phi_x\bigr]\, dx
+ \int_{\{ \theta=0\}}  \bigl[ u \phi_t+  g(u) \phi_x\bigr]\, dx\right\}dt~=~0
\eeq
for every test function $\phi\in \C^1_c\bigl( \,]0,T[\,\times\R\bigr)$.
\item[(iv)] For a.e.~time $t$, at every point $(t,x)$ where $u$ has an approximate jump the Lax
admissibility condition (\ref{Lax}) holds. 
\endi
\end{definition}

{\wen
As we already remarked in connection with the Riemann problem, 
uniqueness does not hold
even within the class of Lax-admissible, piecewise smooth solutions.     
 A possible way to single out a unique solution is to impose a minimality condition on the number of interfaces.

\begin{definition}\label{def:min} We say that a Lax-admissible, piecewise monotone solution $u$ satisfies 
the \textbf{minimal interface condition}  if, in a forward neighborhood of every point $(t,x)$, the solution
$u$ contains a minimal number of interfaces. 
\end{definition}

Notice  that, if $\theta(t,\cdot)$ is constant on an open interval $\,]a,b[$, the above condition implies that
 no new interface can be generated at any point $(t,x)$ with $a<x<b$.    In particular, this rules out 
 the second solution (\ref{idex4}) considered in Example~\ref{ex:22}.

We can now state 
our main result, on  the global existence of solutions to the Cauchy problem
with piecewise monotone initial data as in {\bf (ID1)-(ID2)}.   

\begin{theorem}\label{t:41} Let the flux functions $f,g$ satisfy {\bf (A1)}. Then,
for any initial data as in  {\bf (ID1)-(ID2)}, the Cauchy problem has a piecewise monotone admissible solution, defined for all $t\geq 0$.  Among all such solutions, a unique one is singled out by the minimal interface condition.
%
\end{theorem}
}
\v

\section{Global existence of piecewise monotone solutions}
\label{sec:5}
\setcounter{equation}{0}

To prove Theorem~\ref{t:41},  in this section we construct a piecewise monotone solution to the Cauchy problem (\ref{1})-(\ref{2}), with initial data as in {\bf (ID1)-(ID2)}.
In Section~\ref{S4-1}
we start by constructing a solution on a small time interval.
Because of the finite propagation speed, it suffices to construct a local 
solution in a neighborhood of a given point $(t_0,x_0)$.   
Without loss of generality we shall assume that this point is the origin.
Afterwards, in Section~\ref{S4-2} we show that the solution can be prolonged for all positive times.


\subsection{Construction of local solutions} \label{S4-1}
In this subsection, 
we 
assume that $\bar u$ is monotone separately on the left and on the right of the origin.  
The initial data are
\bel{ida}\left\{\bega{l} u(0,x)\,=\, \bar u(x),\\[1mm]
\theta(0,x)\,=\, \bar\theta(x),\enda\right.\quad \hbox{with} \quad \lim_{x\to \pm 0} \bar u(x)\,=\,u^\pm,\qquad
\bar\theta(x)\,=\,\left\{\bega{rl} \theta^-\quad\hbox{if}~~x<0,\\[1mm]
\theta^+\quad\hbox{if}~~x>0.\enda\right.
\eeq
A local solution to this generalized Riemann problem will be constructed
separately in  the four cases considered in Section~\ref{sec:3}.

\v
\textbf{Case 1}: $\theta^-=\theta^+=1$. 

In this case, we simply define  $u=u(t,x)$ to be the solution of the conservation law
(\ref{clawf}), with $\bar u$ as initial data.  
As for the Riemann problem, two subcases must be considered.

{\bf Case 1A:}   $u^-\leq  u^+$.
The solution of (\ref{clawf})  is thus monotone increasing,  and 
we can set 
$$\theta(t,x)=1\qquad \hbox{for all}~~ t,x.$$   
In this case, no interfaces are present. 

\medskip

{\bf Case 1B:} $u^->u^+$.  In this case the solution $u$ 
has a single downward jump, say located at $x=\gamma(t)$,  that satisfies the Lax condition \eqref{Lax}. 
Here the jump is an interface. We set 
$$\theta(t,x)=\begin{cases}1 &\hbox{if}~~ x\not = \gamma(t)\,,\\
0 &\hbox{if}~~  x=\gamma(t)\,.
\end{cases}$$
\v
\textbf{Case 2}: $\theta^-=1$, $\theta^+=0$.   
We begin by constructing two solutions:
\begi
\item The solution $u=u^\flat(t,x)$  of the Cauchy problem
\bel{cp21}
u_t + f(u)_x~=~0,\qquad u(0,x)\,=\,\left\{ \bega{rl}\bar u(x)\quad\hbox{if}~~x<0,\\[1mm]
+\infty\quad\hbox{if}~~x>0.\enda\right.\eeq
\item The solution $u=u^\sharp(t,x)$ of the Cauchy problem
\bel{cp22}
u_t + g(u)_x~=~0,\qquad 
\qquad 
u(0,x)\,=\,\left\{ \bega{cl}\bar u(x)\quad&\hbox{if}~~x>0,\\[1mm]
u^+ ~&\hbox{if}~~x<0.\enda\right.\eeq
\endi
Since $\bar u$ is increasing for $x<0$ and decreasing for $x>0$, the solution $u^\flat$ 
will contain only rarefaction fronts, together with a rarefaction fan centered at the origin, 
where $u$ ranges from $\bar u(0-)=u^-$ to $+\infty$. 
On the other hand, $u^\sharp$ will contain shocks and compression waves, see Fig~\ref{f:df18}.
%

The solution $u=u(t,x)$ of the original Cauchy problem will be obtained 
by gluing together these two solutions, namely
\bel{glu} u(t,x)~=~\left\{\bega{rl} u^\flat (t,x)\quad\hbox{for} ~~x<y(t),\\[1mm]
u^\sharp (t,x)\quad\hbox{for} ~~x>y(t),\enda\right.\eeq
for a suitable interface $y=y(t)$ which must be determined (see again Fig.~\ref{f:df18}).
Consider the function
\bel{Hdef}
H(t,x)~\doteq~
\frac{f\left( u^\flat (t, x-)\right) - g\left( u^\sharp(t, x+)\right) }{u^\flat (t, x-)- u^\sharp(t, x+)}\,.
\eeq
By the Rankine-Hugoniot equation (\ref{RH}), the interface must satisfy
\bel{odey}
\dot y(t)~=~H(t, y(t)),\qquad \qquad y(0)=0\,.\eeq
The heart of the matter is thus reduced to proving that 
the ODE with discontinuous coefficients \eqref{odey}
has a unique solution. 

As in Fig.~\ref{f:case2A} (left), let $(u^*, f(u^*))$ be the point where the straight line
through $(u^+, g(u^+))$ touches the graph of $f$ tangentially, with $u^*>u^+$.
Since the characteristics impinge on the shock at $y(t)$ from both sides, and $u^\flat(t,\cdot)$
is increasing while $u^\sharp(t,\cdot)$ is decreasing, the left and right states
\bel{ufs}
t\,\mapsto \, u^\flat\bigl(t, y(t)\bigr),\qquad\qquad t\,\mapsto \, u^\sharp\bigl(t, y(t)\bigr),\qquad\qquad t\in [0,t_0]
\eeq
are both non-increasing in time. Therefore they are BV functions, and so is the function 
$t\mapsto \dot y(t)=H\bigl(t, y(t)\bigr)$. In particular, the limit
$$\dot y(0+)\doteq\lim_{t\to 0+} H\bigl(t, y(t)\bigr)$$
is well defined.

\begin{figure}[htbp]
\centering{  \includegraphics[scale=0.35]{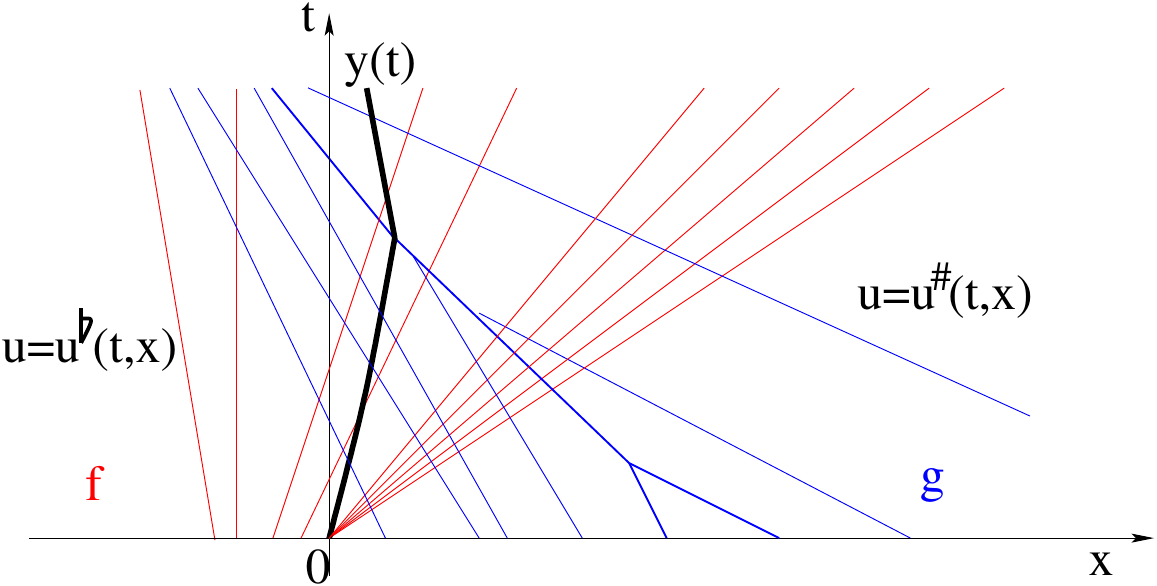}\qquad
  \includegraphics[scale=0.35]{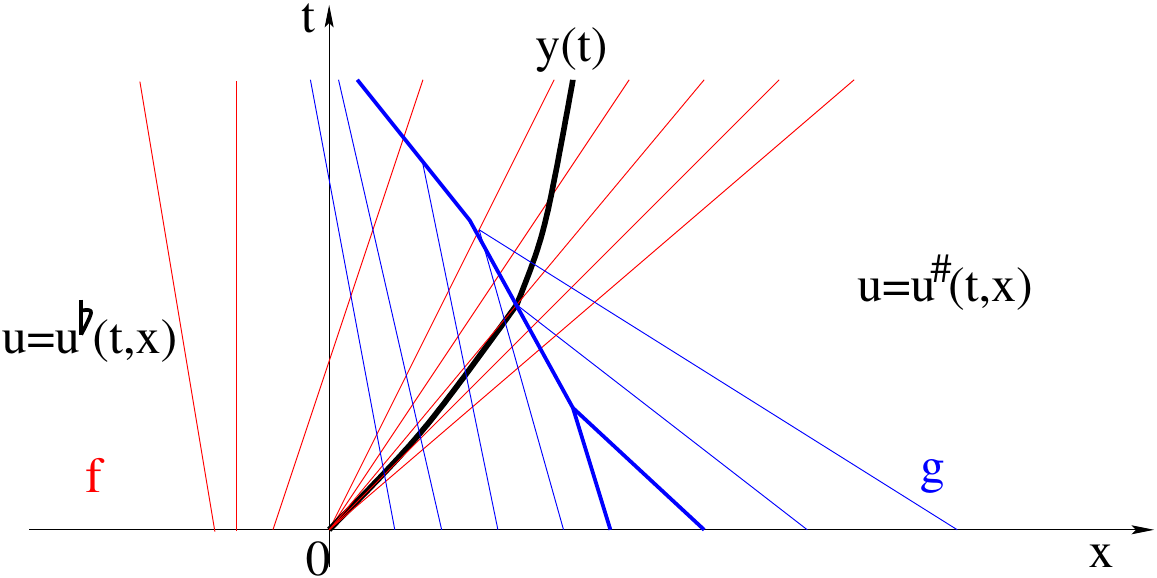}}
    \caption{\small   The solutions $u^\flat$ and $u^\sharp$ of (\ref{cp21}) and (\ref{cp22}), respectively,
    and the interface $y(\cdot)$.  Left: the case 2A where characteristics impinge on the interface $y(\cdot)$
    transversally from both sides.  Right: the case 2B, where the interface lies inside a centered rarefaction fan
    of $u^\flat$.  }
    \label{f:df18}
\end{figure}
%

As in Section~\ref{sec:3}, two sub-cases need to be considered.
\v
{\bf Case 2A}:  $u^->u^*$ (see Fig.~\ref{f:df18}, left).   
{\wen 
Note that, in this case, the solution of the Riemann problem 
(see  Section \ref{sec:3}, CASE 2A) is a 
shock. Characteristics impinge on the shock transversally both from the left and
from the right. 
By continuity, for the local solutions of \eqref{odey}, this strict transversality property continues to hold.}
We begin by proving    that
\bel{doty1}
\dot y(0+)~=~\lambda~\doteq~\frac{f(u^-)- g(u^+)}{ u^--u^+}~<~f'(u^-)\,.
\eeq
Indeed, for $u\geq u^-$ we have
$$\frac{f(u)-g(u^+)}{ u-u^+} ~<~f'(u).$$
On the region where $x/t\geq f'(u^-)$,  hence $u^\flat\geq u^-$, 
and for $g(u^\sharp)$ sufficiently close to $g(u^+)$, 
we have
$$\dot y(t)~=~H(t, y(t))~<~f'\bigl(u^\flat(t,x)\bigr)~=~\frac{x}{ t}\,.$$
This implies that the interface cannot enter the region where $x>t f'(u^-) $.
We can thus study the ODE (\ref{odey}) restricted to the domain where
\bel{ywe} y(t)~\leq ~t f'(u^-).\eeq
Observing that
$$\lim_{ (t,x)\to (0,0), ~  x\leq t f'(u^-)} H(t,x) ~=~\lambda,$$
our claim (\ref{doty1}) is proved. 
\v

Relying on (\ref{ywe}), for any $\ve_1>0$ we can find $\delta_1>0$ such that
\begin{align*}
\bigl|u^\flat (t,x)-u^-\bigr|<\ve_1 \quad &\hbox{for} ~~~x/t ~\leq f'(u^-),~~|t|+|x|<\delta_1,
\\[1mm]
\bigl|u^\sharp(t,x)- u^+\bigr|<\ve_1\quad & \hbox{for} ~~~|t|+|x|<\delta_1.
\end{align*}
Then, for any $\ve>0$ we can find $\delta>0$ such that the function in (\ref{Hdef}) satisfies
\bel{Fep}
\bigl|H(t,x)-\lambda\bigr|\,<\,\ve\quad\quad \hbox{whenever}\quad 
 {x/t}\leq f'(u^-),~~|t|+|x|<\delta.\eeq
We now consider the cone 
\bel{coga}\Gamma~=~\bigl\{(t,x)\,;~~t\geq 0,~~|x-\lambda t|\leq \ve t\bigr\}.\eeq
By (\ref{Fep}), all solutions to the Cauchy problem~\eqref{odey}
remain inside $\Gamma$, for $t\in [0, t_0]$ small.


We claim that, in the cone $\Gamma$, 
{\wen thanks to the strict transversality property,}
the function $H$ has bounded directional variation, hence the uniqueness
theorem in \cite{B88} can be applied.
Indeed, let $t\mapsto x(t)$ be any Lipschitz function such that
$$\bigl|H(t,x)-\lambda\bigr|~\leq~2\ve\qquad\qquad\forall t\in [0,t_0],$$
with $t_0>0$ sufficiently small.
Since in (\ref{Hdef}) the denominator is uniformly positive on $\Gamma$,
it suffices to check that the total variation of the functions
\bel{ufs2}
t\,\mapsto \, u^\flat\bigl(t, x(t)\bigr),\qquad\qquad t\,\mapsto \, u^\sharp\bigl(t, x(t)\bigr),\qquad\qquad t\in [0,t_0],
\eeq
is uniformly bounded.
This is clear, because on $\Gamma$ the characteristic speeds satisfy
$$
f'\bigl( u^\flat(t, x(t))\bigr)~>~\lambda+2\ve,\qquad\qquad g'\bigl( u^\sharp(t, x(t))\bigr)~<~\lambda-2\ve.
$$
Hence the two functions  in (\ref{ufs2}) are both non-increasing. Their total variation is bounded in terms
of the total variation of $\bar u$ in a neighborhood of the origin.  
An application of the result in \cite{B88} now yields the local existence of a unique solution $t\mapsto y(t)$
to the 
Cauchy problem (\ref{odey}).

\v

{\bf Case 2B}:  $u^-\leq u^*$   (see Fig.~\ref{f:df18}, right).
{\wen 
We recall that the solution of the Riemann problem 
(see Section~\ref{sec:3}, CASE 2B) consists of a centered rarefaction attached to the left of a shock.
The speed of the shock coincides with the speed of characteristics on the left of the shock. 
In this case, the strict transversality property no longer holds
and the result in \cite{B88} cannot be directly applied. 
Uniqueness of the solution to the ODE   (\ref{odey}) will be proved by means of the auxiliary Lemma~\ref{l:41}, stated below.}

We first observe that, by (\ref{cp22}), 
$$\lim_{(t,x)\to (0,0)}~u^\sharp(t,x)~=~u^+.$$
With reference to Fig.~\ref{f:case2B}, left, we have the implications
\bel{we1}\left\{\bega{l}\ds u>u^*\quad\implies\quad f'(u)~>~\frac{f(u)-g(u^+)}{ u-u^+}\,,
\\[2mm] \ds u<u^*\quad\implies\quad f'(u)~<~\frac{f(u)-g(u^+)}{ u-u^+}\,.\enda\right.\eeq
Let now $\ve_2>0$ be given and be sufficiently small. Consider the wedge
\bel{wedge}
{\cal W}_{\ve_2}~\doteq~\bigl\{ (t,x)\,;\quad t\ge 0,\quad (\lambda-\ve_2)t \leq {x}\leq 
(\lambda+\ve_2)t\bigr\}.
\eeq
Therefore, 
for $t>0$ and all points $(t,x)$ sufficiently close to the origin,  we have 
\bel{we2}\left\{\bega{l}\ds  f' (u^\flat) \geq \lambda+\ve_2 \quad\implies\quad \frac{x}{ t}
\,=\,f'(u^\flat)\,>\,\frac{f(u^\flat)-g(u^\sharp)}{ u^\flat-u^\sharp}\,,\\[2mm]
\ds  f' (u^\flat) \leq \lambda-\ve_2  \quad\implies\quad \frac{x}{ t} \,=\,f'(u^\flat )\,<\,
\frac{f(u^\flat )-g(u^\sharp)}{ u^\flat-u^\sharp}\,.\enda\right.
\eeq
This shows that, for $t\in [0, t_\ve]$ small, the trajectory of (\ref{odey}) remains inside the wedge ${\cal W}_{\ve_2}$.
\v
 Working within the class of Lipschitz functions 
whose graph is contained in ${\cal W}_{\ve_2}$, we now consider the Picard operator
\bel{Pic}
(\P y)(t)=\int_0^t 
\phi\left(u^\flat\bigl(s,y(s)), u^\sharp(s,y(s)+)\right)\, ds ,
\qquad\mbox{where}\quad
\phi(u,v) \,\dot=\, \frac{f(u)-g(v)}{u-v}. 
\eeq
Since in the wedge ${\cal W}_{\ve_2}$ we have $u^* > u^+ > u^\sharp$, 
the denominator of 
the integrand in the above Picard operator remains uniformly positive.
Therefore  the integrand function $\phi=\phi(u,v)$ depends Lipschitz continuously on its arguments. 

In order to achieve a contractivity property,  consider the family of Lipschitz functions 
\bel{FLIP}\F~\doteq~\left\{ y\in W^{1,\infty}\bigl([0,t_0]\bigr);~\bigl|\dot y(t)-f'(u^*)\bigr|\leq \ve_2~~\hbox{for a.e.}~
t,\quad
\lim_{t\to 0+} \frac{y(t)}{ t}=f'(u^*)\right\},\eeq
with distance
$$d(y,z)~\doteq~\sup_{0<t\leq t_0} \left| \frac{y(t)-z(t)}{ t}\right|.$$
Let $y,z$ be two such Lipschitz functions with  $d(y,z) =\delta$,
so that (see Fig.~\ref{f:df30})
\bel{yz} 
\bigl|y(t)-z(t)\bigr|~\leq~\delta t\qquad\qquad\forall t\in [0,t_0].
\eeq

To estimate the distance $\Big| (\P y)(t)- (\P z)(t)\Big|$, for clarity we estimate separately 
the terms arising from the change in $u^\flat$ and the terms arising from the change in $u^\sharp$. 
We  write 
\bel{P11} 
 (\P y)(t)- (\P z)(t)~=~ \int_0^tA(s)\, ds+\int_0^t B(s)\, ds\,,
\eeq
where
\begin{align}
A(s) &~ \dot= ~\phi\left(u^\flat\bigl(s,y(s)), u^\sharp(s,y(s)+)\right) - \phi\left(u^\flat\bigl(s,z(s)), u^\sharp(s,y(s)+)\right) ,
\label{eq:A}\\
B(s)  & ~\dot=~ \phi\left(u^\flat\bigl(s,z(s)), u^\sharp(s,y(s)+)\right) - \phi\left(u^\flat\bigl(s,z(s)), u^\sharp(s,z(s)+)\right).
\label{eq:B}
\end{align}
{\wen
 Notice that the  term $A$ in \eqref{eq:A} 
accounts for the difference in speed $\dot y - \dot z$, due to
different values of $u^\flat$ on the left  of the interface.
}
Applying the mean value theorem we obtain
\[
A(s)\, =\, \phi_u \left(\Hat u^\flat, u^\sharp(s,y(s)+)\right) \cdot 
\left( u^\flat\bigl(s,y(s)) - u^\flat\bigl(s,z(s))\right) ,
\]
where $\Hat u^\flat$ is some intermediate value between $u^\flat\bigl(s,y(s))$ and $u^\flat\bigl(s,z(s))$.
Note that
\begin{align}
\label{phiu0}
\phi_u(u^*,u^+)\,&=\, \frac{1}{u^*-u^+} \left(f'(u^*) - \frac{f(u^*)-g(u^+)}{u^*-u^+}\right)\,=\, 0.
\end{align}
Performing a linearization of the term $\phi_u$ at the point  $(u^*,u^+)$,  and using \eqref{phiu0}, 
we obtain
\begin{align}
\bigl|A(s)\bigr|
 &\le
\O(1) \cdot
 \Big(\bigl|u^*- u^\flat(s,y(s))\bigr| +\bigl|u^*- u^\flat(s,z(s))\bigr|+ \bigl| u^+ - u^\sharp(s,y(s))\bigr| \Big) \cdot   |y(s)-z(s)| 
\nonumber\\
  &\leq  \O(1) \cdot s\, d(y,z),
\label{Pyz} 
\end{align}
The terms $ |u^*- u^\flat|$, $ \bigl| u^+ - u^\sharp|$ can be rendered arbitrarily small by choosing
$\ve_2>0$ small enough in the definition of the wedge ${\cal W}_{\ve_2}$ at (\ref{wedge}).
This yields, for some constant $C$,
\bel{Abo}\int_0^t \bigl|A(s)\bigr|\, ds~\leq~C\cdot \frac{t^2}{ 2}\, d(y,z)~\leq~\frac{t}{ 4} \,d(y,z)\eeq
for all $t\in [0,\tau]$, provided $\tau>0$ was chosen small enough.

%
%
{\wen 
It remains to estimate the integral of $B$  in \eqref{P11}.
Notice that the  term $B$ in \eqref{eq:B} 
accounts for the difference in speed $\dot y - \dot z$, due to
different values of $u^\sharp$ on the right  of the interface.
Toward this goal, we shall use }

\begin{lemma}\label{l:41} Given a constant $a>0$, let $v:[0,\tau]\times \R\mapsto\R$ be a function with the properties
\begi
\item[(i)] For every time $t$, the function $x\mapsto v(t,x)$ is decreasing.
\item[(ii)] For every Lipschitz curve $t\mapsto \gamma(t)$ 
with derivative $\dot\gamma(t)\geq a-\ve$, the composite function
$t\mapsto v\bigl(t, \gamma(t)\bigr)$ is decreasing.
\endi
Consider any two Lipschitz functions $y,z:[0,\tau]\mapsto \R$ such that  
$$y(0)=z(0),
\qquad\qquad \dot y(t)\geq a,\quad \dot z(t)\geq a\quad\hbox{ for a.e.}~t\in [0,\tau],$$
and define
$$v_{max} \,\doteq\,\sup_{t,x} ~v(t,x),\qquad v_{min}\,\doteq \,\inf_{t,x} ~v(t,x),\qquad\qquad  \delta(t)\,\doteq\,\sup_{0<s<t} 
\bigl|y(s)-z(s)\bigr|.$$
Then, for every $t\in [0,\tau]$ one has
\bel{iclo}
\int_0^t \bigl| v(s, y(s))- v(s, z(s))\bigr|\, ds~\leq~2\delta(t)\cdot \frac{v_{max}-v_{min}}{\ve}\,.\eeq
\end{lemma}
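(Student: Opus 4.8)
The plan is to bound the time-integral $\int_0^t |v(s,y(s)) - v(s,z(s))|\,ds$ by a geometric argument that converts the pointwise difference $|v(s,y(s))-v(s,z(s))|$ into a statement about how much ``new'' variation of $v$ the two curves encounter. Fix $t \in [0,\tau]$ and, without loss of generality, assume at a given time $s$ that $y(s) \le z(s)$ (the roles of $y,z$ may swap as $s$ varies, but the argument below is symmetric). Since $x \mapsto v(s,x)$ is decreasing by (i), we have $0 \le v(s,y(s)) - v(s,z(s)) = \int_{y(s)}^{z(s)} |v_x(s,x)|\,dx$ in the sense of the (negative) distributional derivative. The key point is that the strip swept between the graphs of $y$ and $z$ has horizontal width at most $\delta(t)$ at every time $s \in [0,t]$, so we want to show that the total ``mass'' of $|v_x|$ that can be picked up, integrated in time, is controlled by $\delta(t)$ times the oscillation $v_{max}-v_{min}$ of $v$, with the factor $1/\ve$ coming from the transversality of the characteristic-type curves to the region.

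Concretely, I would introduce for each level $\mu \in [v_{min}, v_{max}]$ the (essentially unique, by monotonicity in $x$) curve $s \mapsto \xi_\mu(s)$ defined by $v(s,\xi_\mu(s)) = \mu$; property (ii) forces each such level curve to have slope $\dot\xi_\mu(s) \le a - \ve$ wherever it is defined — indeed if it had slope $\ge a-\ve$ along some sub-interval, $v$ composed with it would be both constant and strictly decreasing, a contradiction (one must be slightly careful about where the level is attained on a jump, but monotonicity makes this routine). Now $|v(s,y(s)) - v(s,z(s))|$ equals the measure of the set of levels $\mu$ such that $\xi_\mu(s)$ lies between $y(s)$ and $z(s)$. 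Therefore
\[
\int_0^t \bigl| v(s,y(s)) - v(s,z(s))\bigr|\, ds ~=~ \int_{v_{min}}^{v_{max}} \bigl| \{ s \in [0,t] : \xi_\mu(s) \text{ lies between } y(s),z(s)\}\bigr|\, d\mu .
\]
For a fixed level $\mu$, the curve $\xi_\mu$ has slope $\le a-\ve$ while both $y,z$ have slope $\ge a$; hence $\xi_\mu$ crosses the $\delta(t)$-wide strip between $y$ and $z$ at relative speed at least $\ve$, so it can stay inside that strip for a total time of at most $2\delta(t)/\ve$ (the factor $2$ absorbing the fact that $y$ and $z$ may exchange order, so the strip has total width $\le 2\delta(t)$ when measured with an absolute value, or can be entered and exited at both edges). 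Integrating over $\mu \in [v_{min},v_{max}]$ gives exactly the bound $2\delta(t)(v_{max}-v_{min})/\ve$ in \eqref{iclo}.

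The main obstacle, and the place where I would spend the most care, is making the ``level curve $\xi_\mu$'' and the crossing-time estimate rigorous when $v$ is merely BV in $x$ (not continuous): the level sets can be intervals, $v$ can jump over a whole range of values $\mu$ at a single point, and the slope condition in (ii) is a hypothesis only about genuine Lipschitz curves $\gamma$. The clean way around this is to avoid pointwise level curves altogether and instead argue by a change of variables / co-area style identity: work with the measure $d v(s,\cdot)$ (a negative measure in $x$ for each $s$) and show directly, using (ii) applied to the curves $y$ and $z$ themselves, that the ``flux'' of $v$-variation into the region between the graphs is nonnegative in a way that, integrated, telescopes. Alternatively one mollifies $v$ in $x$, runs the clean argument on the smooth approximations where level curves genuinely exist and are Lipschitz, and passes to the limit; the bound is uniform in the mollification parameter because only $v_{max}, v_{min}, \ve, \delta(t)$ enter. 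I expect the smoothing route to be the most transparent to write, with property (ii) being exactly what survives the limit to guarantee the slope bound $\dot\xi_\mu \le a - \ve$ on the approximants.
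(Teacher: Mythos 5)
Your overall strategy --- rewrite the time integral by a layer-cake formula and bound, for each level $\mu$, the amount of time that level can sit between the two curves, using that level lines of $v$ move with speed at most $a-\varepsilon$ while $y,z$ move with speed at least $a$ --- is in essence the same mechanism as the paper's proof, and it produces the right constant. The paper, however, never touches level sets of $v$ in the $(t,x)$-plane: it notes that $s\mapsto v(s,y(s))$ and $s\mapsto v(s,z(s))$ are decreasing by (ii), shows by means of the backward line $\gamma(s)=y(t)+(a-\varepsilon)(s-t)$ together with (i) that whenever $y(t)<z(t)$ one has $v(t,y(t))\le v(t',z(t'))$ for some $t'$ with $0<t-t'\le \delta(t)/\varepsilon$, and then applies the layer-cake identity to the positive part $[v(s,y(s))-v(s,z(s))]_+$, symmetrizing in $y,z$ at the end.

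The genuine gap in your write-up is exactly the step you flag but do not close: the existence and the slope bound of the level curves $\xi_\mu$. Your justification of $\dot\xi_\mu\le a-\varepsilon$ is not correct as stated: hypothesis (ii) asserts only non-strict decrease, so ``constant and strictly decreasing'' is not a contradiction; $v$ may well be constant along curves of slope $\ge a-\varepsilon$, and on flat parts of $v(s,\cdot)$ a naively chosen level curve can be multivalued or move to the right faster than $a-\varepsilon$. The mollification repair also does not deliver what you need: smoothing in $x$ does preserve (i)--(ii), but the mollified function can still be constant on $x$-intervals and nothing in the hypotheses gives any continuity in $t$, so genuine Lipschitz level curves are not produced. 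What does work is to fix the monotone selection $\xi_\mu(s)\doteq\inf\{x:\ v(s,x)<\mu\}$ and prove the one-sided increment bound $\xi_\mu(s_2)-\xi_\mu(s_1)\le (a-\varepsilon)(s_2-s_1)$ by comparing along a segment of slope $a-\varepsilon$ and combining (i) with (ii); but that comparison is precisely the paper's backward-line argument, so the honest completion of your route collapses onto the proof given in the paper. Once that step is supplied (or once the level curves are replaced outright by the decreasing functions $s\mapsto v(s,y(s))$, $s\mapsto v(s,z(s))$, as the paper does), your crossing-time estimate is sound and yields \eqref{iclo}.
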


\begin{figure}[htbp]
\begin{center}
\resizebox{.6\textwidth}{!}{
\begin{picture}(0,0)%
\includegraphics{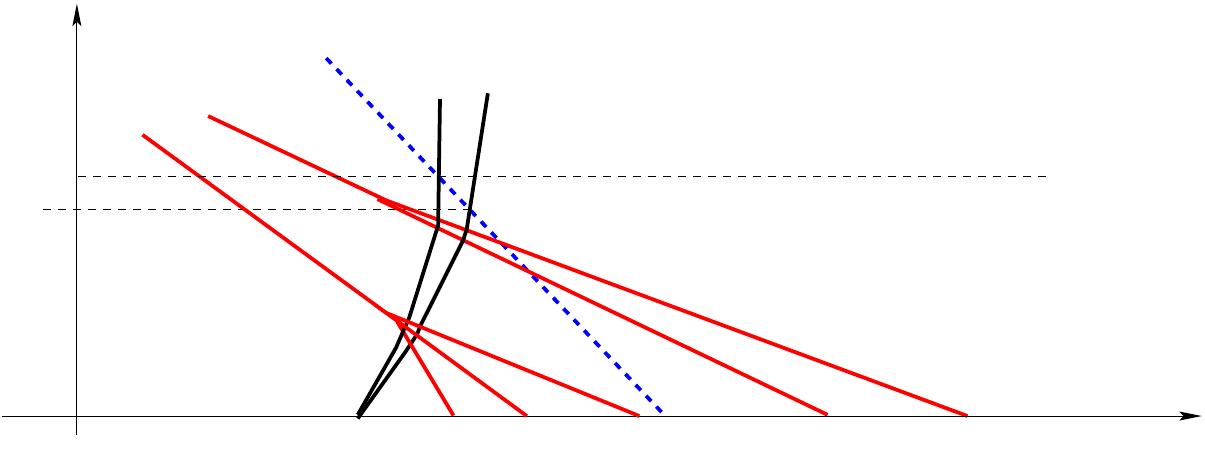}%
\end{picture}%
\setlength{\unitlength}{3947sp}%
\begin{picture}(9627,3717)(1186,434)
\put(10426,539){\makebox(0,0)[lb]{\smash{\fontsize{20.74}{26.4}\usefont{T1}{ptm}{m}{n}{\color[rgb]{0,0,0}$x$}%
}}}
\put(1351,2714){\makebox(0,0)[lb]{\smash{\fontsize{20.74}{26.4}\usefont{T1}{ptm}{m}{n}{\color[rgb]{0,0,0}$t$}%
}}}
\put(1201,2339){\makebox(0,0)[lb]{\smash{\fontsize{20.74}{26.4}\usefont{T1}{ptm}{m}{n}{\color[rgb]{0,0,0}$t'$}%
}}}
\put(4351,3464){\makebox(0,0)[lb]{\smash{\fontsize{20.74}{26.4}\usefont{T1}{ptm}{m}{n}{\color[rgb]{0,0,0}$y(t)$}%
}}}
\put(5026,3464){\makebox(0,0)[lb]{\smash{\fontsize{20.74}{26.4}\usefont{T1}{ptm}{m}{n}{\color[rgb]{0,0,0}$z(t)$}%
}}}
\put(3226,3839){\makebox(0,0)[lb]{\smash{\fontsize{20.74}{26.4}\usefont{T1}{ptm}{m}{n}{\color[rgb]{0,0,1}$\gamma(t)$}%
}}}
\end{picture}%
}
\caption{\small Proving Lemma~\ref{l:41}.
The function $v=v(t,x) $ is decreasing on every horizontal line. It is also decreasing (as a function of time)
along the line $\gamma$.  Hence $v(t, y(t)) \leq v(t', z(t'))$ for some time $t'$ satisfying (\ref{zt'}).
}
\label{f:df30}
\end{center}
\end{figure}

\begin{figure}[htbp]
\begin{center}
\resizebox{.7\textwidth}{!}{
\begin{picture}(0,0)%
\includegraphics{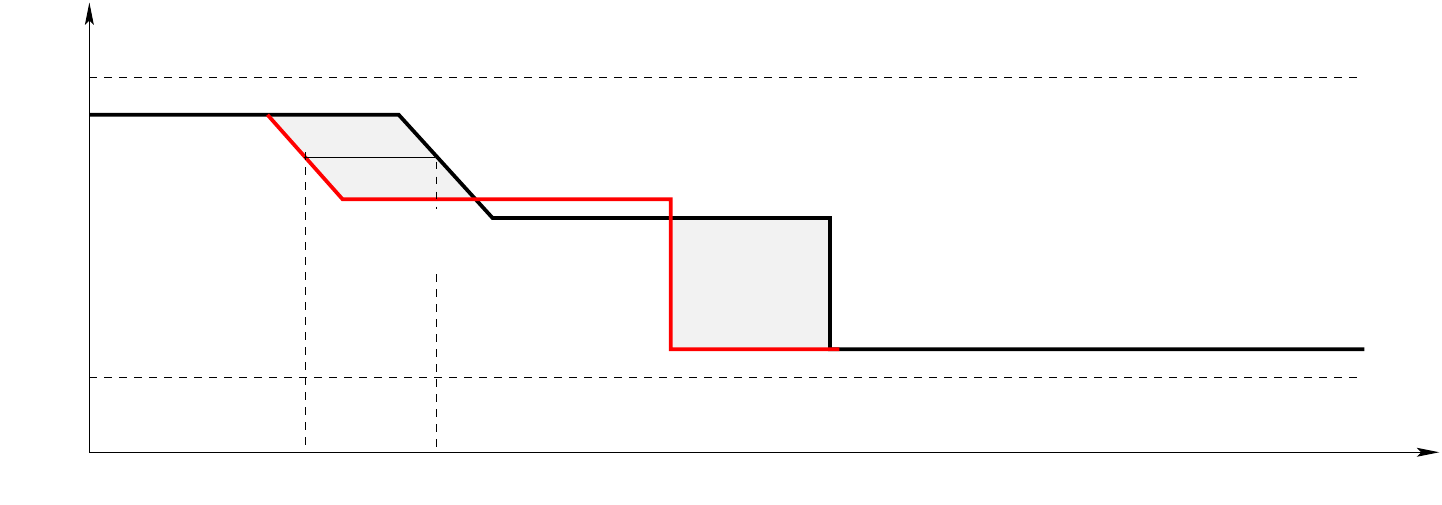}%
\end{picture}%
\setlength{\unitlength}{3947sp}%
\begin{picture}(11527,4027)(-714,424)
\put(1676,2454){\makebox(0,0)[lb]{\smash{\fontsize{20.74}{26.4}\usefont{T1}{ptm}{m}{n}{\color[rgb]{1,0,0}$v(t,z(t))$}%
}}}
\put(2886,3169){\makebox(0,0)[lb]{\smash{\fontsize{20.74}{26.4}\usefont{T1}{ptm}{m}{n}{\color[rgb]{0,0,0}$v(t,y(t))$}%
}}}
\put(1591,529){\makebox(0,0)[lb]{\smash{\fontsize{20.74}{26.4}\usefont{T1}{ptm}{m}{n}{\color[rgb]{0,0,0}$t'$}%
}}}
\put(-684,3769){\makebox(0,0)[lb]{\smash{\fontsize{20.74}{26.4}\usefont{T1}{ptm}{m}{n}{\color[rgb]{0,0,0}$v_{\max}$}%
}}}
\put(-699,1354){\makebox(0,0)[lb]{\smash{\fontsize{20.74}{26.4}\usefont{T1}{ptm}{m}{n}{\color[rgb]{0,0,0}$v_{\min}$}%
}}}
\put(2756,524){\makebox(0,0)[lb]{\smash{\fontsize{20.74}{26.4}\usefont{T1}{ptm}{m}{n}{\color[rgb]{0,0,0}$t$}%
}}}
\put(10116,574){\makebox(0,0)[lb]{\smash{\fontsize{20.74}{26.4}\usefont{T1}{ptm}{m}{n}{\color[rgb]{0,0,0}$t$}%
}}}
\end{picture}%
}
\caption{\small  The estimate (\ref{ppart}).    For every time $t$ such that $v(t, y(t)) > v(t, z(t))$, 
one can find an earlier time $t'$ such that (\ref{tt'})-(\ref{zt'}) hold.
}
\label{f:df45}
\end{center}
\end{figure}

{\bf Proof of Lemma~\ref{l:41}.}  By the assumption (ii), the two maps $t\mapsto v(t, y(t))$ and $t\mapsto v(t,z(t))$ are decreasing.
For any time $t\in [0,\tau]$, two cases can arise:

Case 1: $y(t)\geq z(t)$.  By (i) this yields $v(t, y(t))\leq v(t, z(t))$.

Case 2: $y(t)< z(t)$. In this case, consider the line $\gamma(s)= y(t) + (a-\ve) (s-t)$. Let $t'$ be the time 
where $ z(t') ~=~\gamma(t')$, see Figure~\ref{f:df30}. 
Observing that
$$\dot\gamma(s)~=~a-\ve~\leq~\dot z(s)-\ve,$$
we have
\bel{tt'} 0~<~t-t'~\leq~\frac{z(t)- y(t)}{\ve}~\leq~\frac{\delta(t)}{ \ve}.\eeq
Moreover, since $s\mapsto v\bigl(s, \gamma(s)\bigr)$ is decreasing, one has
\bel{zt'} v(t, y(t))~\leq~ v(t', z(t')).\eeq

Denoting by $[\alpha]_+=\max\{\alpha,0\}$ the positive part of a number $\alpha$, the previous inequalities yield
{\wen (see the shaded area in Figure~\ref{f:df45})}
\begin{eqnarray}
&& \hspace{-2cm} 
\int_0^t \Big[v\bigl(s, y(s)\bigr)- v\bigl(s, z(s)\bigr)\Big]_+ds
\nonumber 
\\
&=&
{\wen \int_0^t\meas\Big\{ \omega\in\R;~v(s,y(s))>\omega >v(s, z(s))\Big\}\, ds}
\nonumber
\\
&=& \int_{v_{min}}^{v_{max}} \meas\Big\{ s\in [0,t]\,;~v(s,y(s))>\omega >v(s, z(s))\Big\}\, d\omega
\nonumber
\\
&\leq& 
(v_{max}-v_{min}) \, \frac{\delta(t)}{\ve}\,.
\label{ppart}
\end{eqnarray}
%
Notice that the last inequality follows from (\ref{tt'}).

Switching the roles of $y$ and $z$, we obtain a similar inequality for the positive part of $v(s,z(s))-v(s,y(s))$.
Adding up these estimates and using \eqref{zt'}, we get \eqref{iclo}, 
proving the lemma. \endproof

%

\v
Resuming the proof of Theorem~\ref{t:41}, to estimate the term $B$ in (\ref{eq:B}) we consider the function
\bel{phide}
 (s,w)\mapsto \Phi(s, w)~\doteq~\phi\bigl(u^\flat\bigl(t,z(s)), w\bigr) ~= ~
\frac{f\bigl(u^\flat(t,z(s))\bigr) - g(w)}{ u^\flat\bigl(t,z(s)\bigr) - w }\,, \quad 0<s<t\,.
\eeq
Notice that $\Phi$ is Lipschitz continuous w.r.t.~$w$, 
say with constant $L$. Moreover, the function $u^\sharp(t,x)$
defined at (\ref{cp22}) satisfies all the assumptions imposed on $v$ in Lemma~\ref{l:41}, for some  $\ve>0$ providing 
a lower bound on the difference between the characteristic speeds.

Using Lemma~\ref{l:41} we can thus estimate the second integral in (\ref{P11}) as
\begin{align}
\ds\int_0^t \bigl| B(s)\bigr|\, ds \;
&=\ds~\int_0^t \Big|\Phi\bigl(s,u^\sharp(s,y(s))\bigr)- \Phi\bigl(s,u^\sharp(s,z(s))\bigr)\Big|\, ds
\nonumber\\
&\leq~\ds L\cdot \int_0^t \Big| u^\sharp(s, y(s)) - u^\sharp(s, z(s))\Big|\, ds
\nonumber\\
&\leq \ds~\frac{L}{\ve} \cdot \bigl(u^\sharp_{max}-u^\sharp_{min}\bigr)\cdot \sup_{0<s<t} \bigl|y(t)-z(t)\bigr|
\nonumber\\
&\leq~\ds \frac{L}{\ve} \cdot \bigl(u^\sharp_{max}-u^\sharp_{min}\bigr)\cdot  t \, d(y,z).
\label{Bbound}
\end{align}
We can now choose $\tilde x>0$ small enough so that the total variation of $\bar u = u^\sharp(0,\cdot)$ 
on $\,]0, \tilde x]$ is as small as we like.
In particular, working  on a suitably small time interval $[0,\tau]$, we can assume
$$u^\sharp_{max}-u^\sharp_{min}~\leq~\frac{\ve}{ 4L}\,.$$
For every $t\in [0,\tau]$, this yields
\bel{BBB}
\int_0^t \bigl| B(s)\bigr|\, ds~\leq~\frac{1}{ 4} \, t\, d(y,z).\eeq
Combining (\ref{Abo}) with (\ref{BBB}) we obtain
$$\left|\int_0^t A(s) \,ds\right|+\left|\int_0^t B(s) \,ds\right|~\leq~\frac{1}{ 2}  \,t\, d(y,z).$$
This shows that the Picard operator is
a strict contraction, hence it admits a unique fixed point.
This completes the analysis for \textbf{Case 2B}.

\v
The remaining cases can be handled by entirely similar techniques.

\textbf{Case 3}: $\theta^-=0$, $\theta^+=1$.   
We then consider two solutions:
\begi
\item The solution $u=u^\flat(t,x)$  of the Cauchy problem
\bel{cp3}
u_t + g(u)_x~=~0,\qquad u(0,x)\,=\,\left\{ \bega{rl}\bar u(x)\quad\hbox{if}~~x<0,\\[1mm]
u^-\quad\hbox{if}~~x>0.\enda\right.\eeq
\item The solution $u=u^\sharp(t,x)$ of the Cauchy problem
\bel{cp4}
u_t + f(u)_x~=~0,\qquad u(0,x)\,=\,\left\{ \bega{rl}\bar u(x)\quad\hbox{if}~~x>0,\\[1mm]
-\infty \quad\hbox{if}~~x<0.\enda\right.\eeq
\endi
Since $\bar u$ is decreasing for $x<0$ and increasing for $x>0$, the solution
$u^\sharp$ will contain only rarefaction fronts, while $u^\flat$ will contain
shocks and compression waves.   

The solution $u=u(t,x)$ of the original Cauchy problem is obtained 
by gluing together these two solutions, as in (\ref{glu}),
for a suitable interface $y=y(t)$ which must be determined. By the Rankine-Hugoniot
equation (\ref{RH}), this interface must satisfy the ODE~\eqref{odey} 
where now the function $H$ in~\eqref{Hdef} is replaced with
\bel{Hdef2}
H(t, y(t))~\doteq~\frac{g\bigl( u^\flat (t, y(t)-)\bigr) - f\bigl( u^\sharp(t, y(t)+)\bigr) }{
u^\flat (t, y(t)-)- u^\sharp(t, y(t)+)}\,.\eeq

The proof of existence and uniqueness of the solution to the ODE \eqref{odey}, \eqref{Hdef2} 
is achieved by the same arguments as in 
\textbf{Case 2}. We thus omit the details.

\v
\textbf{Case 4}: $\theta^-=\theta^+=0$.   Two sub-cases must be considered. 

{\bf Case 4A}: $\theta^-=\theta^+=0$  and  $u^-\geq u^+$.
In this case, a solution to the Riemann 
problem is obtained by letting  $u=u(t,x)$ be the solution to the conservation law
(\ref{clawg}) 
with $\bar u$ as initial data, and taking $\theta(t,x)=0$ for all $t,x$.
\v
{\bf Case 4B}: $\theta^-=\theta^+=0$  and $u^-< u^+$.
{\wen Motivated by the solution of the Riemann problem in Section \ref{sec:3}, 
in  this case  we have 2 interfaces generated by this type of discontinuity at $t=0$.
Note that this  could only occur at initial time $t=0$.   
For the interface on the right, it can be treated in the same way as for \textbf{Case 2B},
while the interface on the left is similar to that of \textbf{Case 3B}.}


We  consider two solutions:   
\begi
\item The solution $u=u^\flat(t,x)$  of the Cauchy problem
\bel{cp5}
u_t + g(u)_x~=~0,\qquad u(0,x)\,=\,\left\{ \bega{rl}\bar u(x)\quad\hbox{if}~~x<0,\\[1mm]
u^-\quad\hbox{if}~~x>0.\enda\right.\eeq
\item The solution $u=u^\sharp(t,x)$ of the Cauchy problem
\bel{cp6}
u_t + g(u)_x~=~0,\qquad u(0,x)\,=\,\left\{ \bega{rl}\bar u(x)\quad\hbox{if}~~x>0,\\[1mm]
u^+ \quad\hbox{if}~~x<0.\enda\right.\eeq
\endi
Notice that both of these solutions are monotone decreasing, and can contain only
shocks or compression waves.
In addition, we consider the solution to (\ref{clawf}) containing a single, infinitely large 
centered rarefaction. This can be implicitly defined by setting
\bel{Tudef} 
u^\natural (t,x)\,=\,w\qquad \hbox{if}\qquad  
x/t = f'(w).\eeq
As in CASE 4B of the Riemann problem discussed in Section~\ref{sec:3}, we construct a solution to the Cauchy problem by setting
\bel{C4} 
u(t,x)~=~\left\{\bega{rl} u^\flat(t,x)\quad &\hbox{if}\quad x<y(t),\\[1mm]
u^\natural (t,x)\quad &\hbox{if}\quad y(t)<x<z(t),\\[1mm]
u^\sharp(t,x)\quad &\hbox{if}\quad z(t)<x\,.\enda\right.\eeq
The two interfaces $y(\cdot)$ and $z(\cdot)$ are 
uniquely determined by solving the ODEs
\begin{align*}
&\dot y(t)~=~
\frac{g\bigl( u^\flat(t, y(t)-)\bigr) - f\bigl(
u^\natural(t,y(t)+)\bigr)}{u^\flat(t, y(t)-) -
u^\natural (t,y(t)+)}\,,\qquad\qquad y(0)=0,\\
&\dot z(t)~=~
\frac{f\bigl( 
u^\natural(t, z(t)-)\bigr) - g\bigl(u^\sharp(t,z(t)+)\bigr)}{
u^\natural(t, z(t)-) - u^\sharp (t,z(t)+)}\,,\qquad\qquad z(0)=0.
 \end{align*}
The existence and uniqueness of the solutions $y$ and $z$ is proved 
by the same arguments as  {\wen in \textbf{Case 3B}  and \textbf{Case 2B}, respectively. }

\subsection{Global solutions} \label{S4-2}
By the previous steps, a unique solution $u=u(t,x)$ can  
be constructed on some initial interval  $[0, t_0]$.  
{\wen These include all cases discussed in previous sub-section, 
including Case 1A and Case 4B, which 
could only occur at initial time $t=0$.  
Note that at  the initial time $t=0+$ both the number of interfaces and the  total variation of the solution 
can increase compared to the initial data, as in Case 4B, but they remain finite. }

We now claim that this solution can be uniquely extended up to the first time $\tau_1$ where two of the interfaces 
$y_i(\cdot)$ meet each other.

Indeed, on a neighborhood of  a point where $\theta(t_0,\cdot)$ is constant, 
the solution can be prolonged in time simply by constructing an entropy-admissible solution to the
conservation law (\ref{clawf}) if $\theta=1$ or the conservation law (\ref{clawg}) if $\theta=0$,
{\wen as in \textbf{Case 1}}. 

It remains to show that the solution can be extended also in a neighborhood of 
a point $y_i(t)$ where $\theta$ has a jump.  
To fix ideas, assume $\theta(t_0,x)=1$ for $x<y_i(t_0)$ and $\theta(t_0,x)=0$ for $x>y_i(t_0)$.   
In this case, a unique solution is constructed as in {\bf Case 2} of the previous analysis.
On the other hand, if $\theta(t_0,x)=0$ for $x<y_i(t_0)$
and $\theta(t_0,x)=1$ for $x>y_i(t_0)$, the construction performed in  \textbf{Case 3} 
applies.  

In both cases, the solution is prolonged in time, keeping constant the number of interfaces $y_i(\cdot)$.
We observe that, by the Lax admissibility condition (\ref{Lax}), 
characteristics impinge on the curve $x=y_i(t)$ from both sides. As a consequence,
the total variation of the solution $u(t,\cdot)$ does not increase in time.
{\wen
By the same argument, the $\L^\infty$ norm of the
solution does not increase.
}

{\wen
This construction can be continued up to
the first time $\tau_1$ where two or more interfaces join together.   In this case, 
the solution is restarted.   
 As noted before, 
\textbf{Case 4B} will not occur at such interactions.

Therefore, at every restarting point,  the  number of interfaces decreases at least by one.
Hence the total number of these restarting times is finite.  In finitely many steps,
a unique global in time  solution is obtained, for all $t\ge 0$.  
Since the number of restarting times is finite, the solution is uniformly bounded.

To complete the proof of Theorem~\ref{t:41}, it remains to observe that our piecewise monotone solution
is continuous as a map from $[0,T]$ into $\L^1_{loc}$.   Indeed, restricted to a region
 $$\bigl\{ (t,x)\,;~~y_{k-1}(t)< x < y_k(t)\bigr\}$$
 between two interfaces, our solution coincides with an entropy solution of one of the conservation laws 
(\ref{fgclaw})
Since there are finitely many of these interfaces, and their speeds $\dot y_k(t)$ are uniformly bounded,
the conclusion is clear.
}
\endproof

\section{Concluding remarks}
\label{sec:6}
\setcounter{equation}{0}
In this paper we considered the unstable case where $f(u)>g(u)$ for all $u\in\R$, 
assuming that both $f$ and $g$ are strictly convex.  Entirely similar results
can be proved under the assumption that both $f$ and $g$ are strictly concave.
This is a situation usually encountered in connection with traffic flow.

{\wen
The stable case, where $f(u)<g(u)$ for all $u\in\R$, has been studied in the companion paper \cite{ABS}.  
At this stage, a natural open question is what happens if the graphs of $f$ and $g$ intersect at one point.
Indeed, in connection with  models of traffic flow with hysteresis, observed traffic data suggests  that
for low density traffic one has $f<g$, while for high density traffic one has $f>g$; see~\cite{TM_data, CF19}. 
As shown by the examples in Section~\ref{sec:2}, this instability allows the emergence of new spikes
in the traffic density, 
as a response to arbitrarily small perturbations.     
Such spikes will then persist in time, leading to the formation of stop-and-go waves.  
A stochastic model, where the location of new spikes follows a Poisson 
distribution in space and time, is currently being investigated in \cite{BSu}.
}

\v
{\small
{\bf Acknowledgments.} The research of A.\,Bressan was partially supported by NSF with
grant  DMS-2306926, ``Regularity and approximation of solutions to conservation laws".
The research by D.\,Amadori was partially supported by the Ministry of University and Research (MUR), Italy 
under the grant PRIN 2020 - Project N. 20204NT8W4, ``Nonlinear evolution PDEs, fluid dynamics and transport equations: 
theoretical foundations and applications" and by the INdAM-GNAMPA Project 2023, CUP E53C22001930001, 
``Equazioni iperboliche e applicazioni". 
D.\,Amadori acknowledges the kind hospitality of Penn State University, where this research started.
{\wen The authors also wish to thank the anonymous referees, for carefully reading the manuscript
and for their several useful suggestions.}
}

\addcontentsline{toc}{section}{References}


\begin{thebibliography}{99}

\bibitem{AMV05} Adimurthi, S.~Mishra and G.~D. Veerappa~Gowda.
Optimal entropy solutions for conservation laws with discontinuous flux-functions.
{\it J.~Hyperbolic Differ.~Equat.}  {\bf 2} (2005), 783--837. 


\bibitem{ABS} D.~Amadori, A.~Bressan and W.~Shen.
Conservation laws with discontinuous gradient-dependent flux: the stable case. 
\textit{Math. Models Methods Appl. Sci.} \textbf{35} (2025), 1421--1469.

\bibitem{ACnew} F.~Ancona and  M.~T.~Chiri.
Well-posedness of conservation laws for production lines with discontinuous flux in the unknown.
Preprint, University of Padova, 2024. 

\bibitem{And15} B.~Andreianov. 
New approaches to describing admissibility of solutions of scalar conservation laws with discontinuous flux. 
\textit{ESAIM:~Proc.~and Surveys}, \textbf{50} (2015), 40--65.
%
%
\bibitem{AKR11} B.~Andreianov, K.~H.~Karlsen, and N.~H.~Risebro. 
A theory of $L^1$-dissipative solvers for scalar conservation laws with discontinuous flux. 
\textit{Arch.~Ration.~Mech.~Anal.}, {\bf 201} (2011), 27--86.

\bibitem{AM15} B.~Andreianov and D.~Mitrovi\'{c}. 
Entropy conditions for scalar conservation laws with discontinuous flux revisited. 
\textit{Ann.~Inst.~H.~Poincar\'{e} Anal. Non Lin\'{e}aire}, {\bf 32} (2015), 1307--1335.

\bibitem{AP05} E.~Audusse and B.~Perthame.
Uniqueness for scalar conservation laws with discontinuous flux via adapted entropies. 
\textit{Proc.~Roy.~Soc.~Edinburgh}, Sect. A, \textbf{135} (2005), 253--265. 

\bibitem{BV06} F.~Bachmann and J.~Vovelle.
Existence and uniqueness of entropy solution of scalar conservation laws with 
a flux function involving discontinuous coefficients. 
\textit{Comm.~Partial Differential Equations}, \textbf{31} (2006), 371--395. 

\bibitem{B88} A.~Bressan.
Unique solutions for a class of  discontinuous differential equations.
\textit{Proc.~Amer.~Math.~Soc.}, {\bf 104} (1988), 772--778.

\bibitem{Bbook} A.~Bressan. 
{\it Hyperbolic systems of conservation laws. The one dimensional Cauchy problem}. 
Oxford University Press, 2000.

\bibitem{BGS18} A.~Bressan, G.~Guerra and W.~Shen.
Vanishing viscosity solutions for conservation laws with regulated flux.
\textit{J.~Differential Equations}, \textbf{266} (2019), 312--351.

\bibitem{BS98} A.~Bressan and W.~Shen.
Uniqueness for discontinuous ODE and conservation laws.
\textit{Nonlinear Anal.}, {\bf 34} (1998), 
637--652.


\bibitem{BSu} A.~Bressan and S.\,K.\,Suresha, A two-fluxes stochastic model of traffic waves.
Work in progress.


\bibitem{BGMS11} M.~Bul\'{\i}\u{c}ek, P.~Gwiazda, J.~M\'{a}lek and A.~\'{S}wierczewska-Gwiazda.
On scalar hyperbolic conservation laws with a discontinuous flux.
\textit{Math.~Models Methods Appl.~Sci.}, {\bf 21} (2011), 
89--113. 

\bibitem{BGS13} M.~Bul\'{\i}\u{c}ek, P.~Gwiazda and A.~\'{S}wierczewska-Gwiazda.
Multi-dimensional scalar conservation laws with fluxes discontinuous in the unknown and the spatial variable.
\textit{Math.~Models Methods Appl.~Sci.},  {\bf 23} (2013), 
407--439. 

\bibitem{BGS17} M.~Bul\'{\i}\u{c}ek, P.~Gwiazda and A.~\'{S}wierczewska-Gwiazda.
On unified theory for scalar conservation laws with fluxes and sources discontinuous with respect to the unknown.
\textit{J.~Differential Equations}, {\bf 262} (2017), 
313--364. 

\bibitem{CF19} A.~Corli and H.~Fan.
Hysteresis and stop-and-go waves in traffic flows.
{\it Math.~Models Methods Appl. Sci.}, {\bf  29} (2019), 2637--2678.

\bibitem{CR05} G.~M.~Coclite and N.~H.~Risebro.
Conservation laws with time dependent discontinuous coefficients.
{\it SIAM J.~Math.~Anal.}, {\bf 36} (2005), 1293--1309.

\bibitem{Diehl95} S. Diehl. 
On scalar conservation laws with point source and discontinuous flux function.
{\it  SIAM J.~Math.~Anal.}, {\bf 26} (1995), 1425--1451. 


\bibitem{GNPT2006}
M.~Garavello, R.~Natalini, B.~Piccoli, and A.~Terracina.
Conservation laws with discontinuous flux, 
{\it Netw. Heterog. Media} {\bf 2} (2007), 159--179.


\bibitem{Gimse93} T.~Gimse. 
Conservation laws with discontinuous flux functions.
\textit{SIAM J.~Math.~Anal.}, {\bf 24} (1993), 
279--289. 

\bibitem{GR91} T.~Gimse and N.~H.~Risebro. 
Riemann problems with a discontinuous flux function.
{\it Third International Conference on Hyperbolic Problems}, (Uppsala, 1990), 488--502.
Studentlitteratur, Lund, 1991.

\bibitem{GR92} T.~Gimse and  N.~H.~Risebro.
Solution of the Cauchy problem for a conservation law with discontinuous flux function.
{\it SIAM J.~Math.~Anal.}, \textbf{23} (1992), 635--648.

\bibitem{GS19} G.~Guerra and W.~Shen.
Backward Euler approximations for conservation laws with discontinuous flux.
\textit{SIAM J.~Math.~Anal.}, \textbf{51} (2019), 3112--3144.

\bibitem{Fan24} H.~Fan.
Conservation laws with hysteretic fluxes.
\textit{J.~Differential Equations}, \textbf{394} (2024), 1--30. 

\bibitem{KR99} R.~A.~Klausen and N.~H.~Risebro.
Stability of conservation laws with discontinuous coefficients.
{\it J.~Differential Equations}, {\bf  157} (1999), 41--60.

\bibitem{KR95} C.~Klingenberg and N.~H.~Risebro.
Convex conservation laws with discontinuous coefficients, existence, uniqueness and asymptotic behavior.
{\it  Comm.~Partial Differential Equations}, {\bf 20} (1995), 1959--1990.

\bibitem{Mishra05} S.~Mishra. 
Convergence of upwind finite difference schemes for a scalar conservation law 
with indefinite discontinuities in the flux function.
\textit{SIAM J.~Numer.~Anal.}, {\bf 43} (2005), 559--577. 


\bibitem{Panov10} E.~Y.~Panov. 
Existence and strong pre-compactness properties for entropy solutions of a first-order quasilinear equation with discontinuous flux.
\textit{Arch.~Ration.~Mech.~Anal.}, \textbf{195} (2010), 643--673.

\bibitem{Panov23} E.~Y. Panov.
On entropy solutions of scalar conservation laws with discontinuous flux.
\textit{Arch.~Ration.~Mech.~Anal.}, {\bf 247} (2023), paper No.~78, 40 pp.

\bibitem{SeguinVovelle03} N.~Seguin and J.~Vovelle. 
Analysis and approximation of a scalar conservation law with a flux function with discontinuous coefficients.
\textit{Math.~Models Methods Appl.~Sci.}, \textbf{13} (2003), 221--257.

\bibitem{WS15} W.~Shen.
Slow erosion with rough geological layers.
\textit{SIAM J. Math. Anal.} \textbf{47} (2015), 3116--3150. 

\bibitem{WS16} W.~Shen. 
On the Cauchy problems for polymer flooding with gravitation. 
\textit{J.~Differential Equations} \textbf{261} (2016), 627--653.

\bibitem{WS18} W.~Shen. 
Global Riemann solvers for several $3\times 3$ systems of conservation laws with degeneracies.
\textit{Math.~Models Methods Appl.~Sci.}, \textbf{28} (2018), 1599--1626.

\bibitem{Towers2000} J.~D.~Towers. 
Convergence of a difference scheme for conservation laws with a discontinuous flux. 
\textit{SIAM J.~Numer.~Anal.}, \textbf{38}  (2000), 681--698. 

\bibitem{Towers2020} J.~D.~Towers. 
A splitting algorithm for LWR traffic models with flux discontinuous in the unknown.
\textit{J. Comput. Phys.}, {\bf 421} (2020), 109722, 30 pp.   

\bibitem{TM_data} J.~Treiterer and J.~Myers. 
The hysteresis phenomenon in traffic flow. 
In \textit{Proceedings of the Sixth Symposium on Transportation and Traffic Theory},  D.J.\,Buckley, editor, 
(1974), pp.\,13--38.
\end{thebibliography}
\end{document}